%la bibliografia e' fatta con il BIBTEX;
%per stamparla nel file pdf bisogna processare il file.tex con Latex
% e poi con Bibtex; questa operazione produce un file.bbl (che non va aperto)
%bisogna riprocessare con LATEX
% e a questo punto la bibliografia compare nel file.dvi o file.pdf

\documentclass[12pt,tbtags]{amsart}

\usepackage{latexsym,enumerate}
\usepackage{amsmath,amsthm,amsopn,amstext,amscd,amsfonts,amssymb}
\usepackage{color}
\usepackage{marginnote}

\setcounter{MaxMatrixCols}{10}

\makeatletter
\def\th@definition{%
  \normalfont % body font
}
\def\th@plain{%
  \slshape % body font
}
\def\th@remark{%
  \normalfont % body font
  \thm@preskip\topsep
  \divide\thm@preskip\tw@
  \thm@postskip\thm@preskip
}
\makeatother

%\font\fpe=cmr8
\numberwithin{equation}{section}
\theoremstyle{plain}

\newtheorem{theorem}{Theorem}

\newtheorem{proposition}[theorem]{Proposition}
\theoremstyle{remark}
\newtheorem{remark}[theorem]{Remark}
\theoremstyle{definition}
\newtheorem{definition}[theorem]{Definition}

\numberwithin{theorem}{section}
%\numberwithin{corollary}{section}
%\numberwithin{lemma}{section}
%\numberwithin{definition}{section}
%\numberwithin{proposition}{section}
%\numberwithin{remark}{section}
%\numberwithin{example}{section}

\def\Om{\Omega}

\def\R{\mathbb R}

\def\D{\nabla }

%\def\sign{{\rm sign}\,}

%\def\qed{{\quad\rule{2mm}{2mm}\medskip}}

%ABBREVIAZIONI

%\newcommand{\rn}{{\R}^N}
%\newcommand{\D}{\nabla}

\newcommand{\disp}{\displaystyle}
\newcommand{\diw}{{\operatorname{div}}}
\newcommand{\meas}{\operatorname{meas}}
\newcommand{\aop}{\operatorname{\mathbf{a}}}
\newcommand{\med}{{\operatorname{med}}}
\newcommand{\sign}{{\operatorname{sign}}}

\def\m2{|\Omega | /2}

% \newtheorem{myth}{Theorem}[section]
% \newtheorem{myprop}{Proposition}[section]
% \newtheorem{mylem}{Lemma}[section]
% \newtheorem{mycor}{Corollary}[section]

% \theoremstyle{definition}
% \newtheorem{mydef}{Definition}[section]
% \theoremstyle{remark}
% \newtheorem{myrem}{Remark}[section]

%%% note de marge
%%% margin note

%%%

\begin{document}
%\nocite{*}
\date{}

\title[]{Uniqueness  for Neumann problems for nonlinear elliptic equations}

\begin{abstract} In the present paper we prove uniqueness results for solutions to a class of Neumann boundary value problems 
whose prototype is
\begin{equation*} \label{pb0}
    \begin{cases}
        -\diw((1+|\D u|^2)^{(p-2)/2} \D u) -\diw (c(x)|u|^{p-2}u) =f   & \text{in}\ \Omega, \\[.1cm] 
      \left( (1+|\D u|^2)^{(p-2)/2} \D u+ c(x)|u|^{p-2}u \right)\cdot\underline n=0 & \text{on}\ \partial \Omega ,%
  \end{cases}%
\end{equation*}%
where $\Omega$ is a  bounded domain of $\R^{N}$,
$N\geq 2$, with Lipschitz boundary, $1< p< N$ ,
$\underline n$ is the outer 
unit normal to $\partial \Omega$, the datum $f$ belongs to $L^{(p^{*})'}(\Omega)$ or to $L^{1}(\Omega)$ and satisfies
the compatibility condition $\int_\Omega f \, dx=0$. Finally  the coefficient $c(x)$ 
belongs to an appropriate Lebesgue space.

\noindent{\sc Mathematics Subject Classification:MSC 2000 : }  35J25

\noindent{\sc Key words: }  Nonlinear elliptic equations, Neumann problems, renormalized solutions, uniqueness results

\end{abstract}

\author{M.F. Betta}% - O. Guib\'e - A. Mercaldo }
\address{Maria Francesca Betta \hfill \break\indent Dipartimento di Ingegneria, \hfill\break\indent Universit\`a degli Studi di Napoli
Parthenope,\hfill\break\indent Centro Direzionale, Isola C4 80143 Napoli,
Italy}
\email{francesca.betta@uniparthenope.it}
\author{O. Guib\'e}
\address{Olivier Guib\'e \hfill \break\indent 
  Laboratoire de Math\'ematiques Rapha\"el Salem, \hfill\break\indent
UMR 6085 CNRS-Universit\'e de Rouen\hfill\break\indent
Avenue de l'Universit\'e, BP.12\hfill\break\indent
76801 Saint-\'Etienne-du-Rouvray, France
}
\email{olivier.guibe@univ-rouen.fr}
\author{A. Mercaldo}
\address{Anna Mercaldo \hfill \break\indent Dipartimento di Matematica e
Applicazioni ``R. Caccioppoli", \hfill\break\indent Universit\`a degli Studi di Napoli 
Federico II,\hfill\break\indent Complesso Monte S. Angelo, Via Cintia,
80126 Napoli, Italy}
\email{mercaldo@unina.it}
\maketitle

\section{Introduction}

In the present paper we prove uniqueness results for solutions to a class of Neumann boundary value problems whose prototype is
\begin{equation} \label{pb0}
    \begin{cases}
        -\diw((1+|\D u|^2)^{(p-2)/2} \D u)-\diw (c(x)|u|^{p-2}u) =f   & \text{in}\ \Omega, \\[.1cm] 
      \left((1+|\D u|^2)^{(p-2)/2} \D u+ c(x)|u|^{p-2}u \right)\cdot\underline n=0 & \text{on}\ \partial \Omega ,%
  \end{cases}%
\end{equation}%
where $\Omega$ is a  bounded domain of $\R^{N}$,
$N\geq 2$, with Lipschitz boundary, $1< p<N$ ,
$\underline n$ is the outer 
unit normal to $\partial \Omega$, the datum $f$ belongs to $L^{(p^{*})'}(\Omega)$, where $p^{*}=\frac{Np}{N-p}$, or to $L^{1}(\Omega)$ and satisfies
the compatibility condition $$\int_\Omega f \,dx=0.$$
Finally  the coefficient $c(x)$ 
belongs to an appropriate Lebesgue space which will be specified later.

%\noindent Existence  results for solutions to such a problem has already been proved in \cite{BGM1}. 
The main difficulties in studying existence or uniqueness for this type of problems are due to the presence of a lower order term, the lower summability of the datum $f$ and the boundary Neumann conditions. 

\noindent  The existence for Neumann boundary value problems  with $L^1$-data
when $c=0$ has been  treated in various contests. In
\cite{AMST97}, \cite{Ciabr}, 
\cite{Droniou00}, \cite{DV} and \cite{Prignet97} the
existence of a distributional  solution which belongs to a suitable
Sobolev space  and which has null mean value is proved. Nevertheless
when $p$ is close to 1, i.e. $p\le 2-1/N$, the distributional solution
to problem \eqref{pb0} does not belong to a Sobolev space and in
general is not a summable function; this implies that its mean value
has not meaning and any existence result for distributional solution with null mean value cannot hold.  This difficulty is overcome in \cite{Rako} by
considering solutions $u$ which are not in $L^1(\Omega)$, but for
which $\Phi(u)$ is  in $L^1(\Omega)$, where
$\Phi(t)=\int_{0}^{t}\frac{ds}{(1+|s|)^\alpha}$ with appropriate $\alpha>1$. 

\noindent In \cite{ACMM} the case where 
both the datum $f$ and the domain $\Omega$ are not smooth enough is studied and  the existence and continuity with respect to the data of solutions whose  median is equal to zero is 
proved with a natural process of approximations and symmetrization techniques. 

\noindent We recall that the median of $u$ is defined by 
\begin{equation}\label{-520bis}
  {\rm med } (u) = \sup \{t\in \R : \meas\{u>t\} >     \meas(\Omega)/2\}\, .
\end{equation}
The existence for solutions having null median to problem \eqref{pb0}  when $c\neq 0$ are proved in \cite{BGM1}.

\noindent We explicitly remark that when the datum $f$ has a lower summability, i.e. it is just an $L^1$-function,
one has to give a meaning to the notion of solution; such a question has been faced already in the case where Dirichlet boundary conditions are prescribed, by introducing different notion of 
solutions (cf. \cite{BBGGPV}, \cite{Aglio}, \cite{LM}, \cite{murat94}  
). Such notion turn out to be equivalent, at least when the datum is an 
$L^{1}$-function. 

\noindent In the present paper, when $f\in L^1(\Omega)$, we refer to the so-called renormalized solutions
(see \cite{DMOP},  \cite{LM},  \cite{murat94}) whose
precise definition is recalled in Section 2. 

The main novelty of this article  is to prove uniqueness (up to additive
constants) results for renormalized solutions to problem
\eqref{pb0} having null median and whose existence has been proved  in \cite{BGM1}.

To our knowledge uniqueness results for problem \eqref{pb0} are new even in the
variational case, i.e. when $f$ belongs to $L^{(p^{*})'}(\Omega)$ and the usual notion of weak solution is considered.
%belongs to $W^{1,p}(\Omega)$. 

\noindent  When 
$c(x)=0$ and $f$ is an element of the dual space of the Sobolev space
$W^{1,p}(\Omega)$, the existence and uniqueness (up to additive
constants) of weak solutions to problem \eqref{pb0} is consequence of
the classical theory of pseudo monotone operators (cfr. 
\cite{LL}, \cite{Lions}), while existence results for weak solutions to problem \eqref{pb0} when the lower order term appears  have been proved in \cite{BGM1}.

As pointed out we will prove different results according to the summability of $f$, i.e. $f\in
L^{(p^{*})'}(\Omega)$ or $f\in L^{1}(\Omega)$ and to the value of $p$, i.e. $
p\leq 2$ and $p\geq 2$. As far as $p$ is concerned such a difference is due to
the principal part of the operator, which we consider. Actually we assume that
 the principal part $-\diw(a(x,D u))$ is not degenerate when $p>2$, i.e. in the
model case $-\diw(a(x,\D u))=-\diw((1+|\D u|^2)^{(p-2)/2} \D u)$. But such an
assumption is not required when $p\leq 2$, that is for such values of $p$ we
prove uniqueness results for operators whose prototype is the so-called $p$-Laplace operator, $-\Delta_p u=-\diw(|\D
u|^{p-2}\D u)$.

Let us explain the main ideas of our results. To this aim let us consider the simpler case  of  weak solutions and $p=2$.
 When a Dirichlet boundary value problem is considered, following an idea
of Artola \cite{Ar86} (see also \cite{BGM92,ChiMi}), denoted by $u$ and $v$  two solutions, one can  use the
test function $T_k(u-v)$ and  obtain
 \begin{equation} \label{intr1}
\lim_{k\to 0}\frac{1}{k^2}\int_\Omega  |\nabla T_k(u-v)|^2\, dx  =0.  
\end{equation} 
Since $u$, $v\in H^1_0(\Omega)$,  Poincar\' e inequality implies that 
\begin{equation} \label{intr2}
\int_\Omega|\sign (u-v)|^2\, dx= \lim_{k\to 0}\frac{1}{k^2}\int_\Omega  | T_k(u-v)|^2\, dx  =0\,, 
\end{equation} 
from which one can deduce that $u=v $ a.e. in $\Omega$.
In contrast when we consider Neumann boundary conditions and two solutions $u,\,v\in H^1(\Omega)$ having null median, by using $T_k(u-v)$ we can prove equality \eqref{intr1}, but  Poincar\'e-Wirtinger inequality does not allow to get
\begin{equation*} 
\int_\Omega|\sign (u-v)|^2\, dx= 0  
\end{equation*} 
and therefore that $u=v $ a.e. in $\Omega$.
However  \eqref{intr1} and  Poincar\'e-Wirtinger inequality, allow to deduce
that
$u=v $ a.e. in $\Omega$ either $u>v $ a.e. in $\Omega$ either $u<v $ a.e. in
$\Omega$. Then we prove that $u>v $ a.e. in $\Omega$ or $u<v $ a.e.
in $\Omega$ leads to a contradiction: it  is done through a new test function
\begin{equation*}\label{natural}
w_{k,\delta}=\frac{T_k(u-v)}{k}\left ( \frac{T_\delta(u^+)}{\delta} -\frac{T_\delta(v^-)}{\delta}  \right)\,,
\end{equation*}
where $T_k$ denote the truncate function at height $k$ and 
$$
u^+=\max \{ 0\,, u \}\, , \qquad v^-=\max \{ 0\,, - v \}.
$$

Neumann problems have been studied by a different point of view in \cite{FM2}, \cite{ FM}, while existence or uniqueness results for Dirichlet boundary value problems for nonlinear elliptic equations with $L^1$-data are treated in \cite{BG1},\cite{BG2} and was  continued in various
contributions, including \cite{AM1}, \cite{BeGu},  \cite{BBGGPV},  \cite{BMMP1},  \cite{BMMP},  \cite{DMOP}, \cite{BDD} \cite{Aglio},   \cite{GM2},  \cite{GM1};
mixed boundary value problems have been also studied, for example, in \cite{BeGu}, \cite{Droniou00}.

The paper is organized as follows. In Section 2 we detail the assumptions and we
give the definition of a renormalized solution to \eqref{pb0}. Section 3 is
devoted to prove two uniqueness results for weak solutions when the datum is in the Lebesgue space $L^{(p^{*})'}(\Omega)$. In Section 4 we state our main results, Theorem
\ref{uniq_ren_1}, Theorem \ref{uniq_ren_2}, where we prove the uniqueness of a
renormalized solution to \eqref{pb0} when datum is a $L^1$ function.

\section{Assumptions and definitions}

Let us consider the following nonlinear elliptic  Neumann problem
\begin{equation} \label{pb}
\left\{
\begin{array}{lll}
-\mbox{div}\left( \aop\left( x, \nabla u\right)+ \Phi (x,u) \right) =f &  &
\text{in}\ \Omega, \\
 \left( \aop\left( x, \nabla u\right)+ \Phi (x,u) \right)\cdot\underline n=0& & \text{on}\ \partial \Omega ,%
\end{array}%
\right. 
\end{equation}%
where $\Omega $ is a bounded domain of $\mathbb{R}^{N}$, $N\ge
2$, having finite Lebesgue measure and Lipschitz boundary, $\underline
n$ is the outer unit normal to $\partial \Omega$.  We assume that $p$
is a real number such that  $1<p < N$.
%%%%%%%%%%%%%%%%%%%%%%%%%   GM3 %%%%%%%%%%%%%%%%%%%%%%%
 The function
$\aop\,:\,\Omega\times\R^N\mapsto \R^N$ is a Carath\'eodory 
function such that 
\begin{gather}
  \label{ell}
  \aop(x,\xi)\cdot\xi\geq \alpha |\xi|^p,\quad \alpha>0, \\
  \label{growth}
  |\aop(x,\xi)|\leq c[|\xi|^{p-1}+a_0(x)], \quad c>0,\quad a_0\in
  L^{p'}(\Omega),\quad a_0\geq 0,
\end{gather}
for almost every  $x\in\Omega$ and for every $\xi\in\R^N$.
 Moreover
    $\aop$ is strongly monotone, that is a constant $\beta>0$ exists such that
\begin{gather}
  \label{mon} (\aop(x,\xi)-\aop(x,\eta))\cdot(\xi-\eta) \geq \left\{
    \begin{aligned}
    \null &  \beta \frac{|\xi-\eta|^2}{(|\xi|+|\eta|)^{2-p}} & \text{
      if $1 <  p\leq 2$,} \\
    \null & \beta |\xi-\eta|^2 (1+|\xi|+|\eta|)^{p-2} &\text{ if
      $p\geq 2$,}
    \end{aligned}\right.
\end{gather}
for almost every $x\in \Omega$ and for every 
$\xi,\eta\in\R^N$, $\xi\neq \eta$.

\noindent
We assume that   $\Phi\,:\, \Omega\times \R\mapsto\R^N$
is a  Carath\'eodory function
which satisfies the following  ``growth condition''
 \begin{equation}
  \label{growthphi}
  |\Phi(x,s)|\leq c(x) (1+|s|)^{p-1},\quad c\in L^t(\Omega),\ c\geq 0,
 \end{equation}
 with  
 \begin{equation}
  \label{2n0bis} 
    t\ge\frac{N}{p-1}% \text{ if } p<N,   
    \end{equation}
for a.e. $x\in\Omega$ and for every  $s\in\R$. Moreover we assume that such function is locally
Lipschitz continuous with respect to the second variable, that is
\begin{equation}
  \label{lipphi}
   |\Phi(x,s)-\Phi(x,z)|\leq c(x) (1+|s|+|z|)^{\tau} |s-z|,
  \quad \tau\geq 0,
\end{equation}
for almost every $x\in\Omega$, for every $s$, $z\in\R$.
\par
Finally we assume that the datum $f$ is a measurable function in a Lebesgue space $L^r(\Omega)$, $1\le r\le +\infty$, which belongs to the dual space of the classical Sobolev space $W^{1,p}(\Om)$ or is just an $L^1-$ function. Moreover it satisfies  the compatibility condition
\begin{equation}
\int_{\Om}f\,dx=0.\label{comp}
\end{equation}

\par
% If we assume that $f\in L^{(p^*)'}(\Om) \cap (W^{1,p}(\Om))'$ we deal with weak solutions to problem \eqref{pb}, this means that 
% \[
% u\in W^{1,p}(\Om)
% \]
% and 
% \begin{equation}
% \int_{\Om} a(x,u,\D u)\D\varphi dx+\ \int_{\Om} \Phi(x,u)\D\varphi dx=\int_{\Om}f\varphi dx
% \label{weak}
% \end{equation}
% for every $\varphi \in W^{1,p}(\Om)$. 
As explained in the Introduction we deal with solutions whose median
is equal to zero.
Let us recall that if  $u$ is a measurable function, we denote  the
median of $u$ by
\begin{equation}
    \med (u) = \sup \left\{ t\in \R:\meas \{x\in \Om : u(x)>t\}
        >\frac{\meas(\Om)}{2} \right\}.
\end{equation}
Let us explicitely observe that if $\med(u)=0$ then
\begin{gather*}
    \meas \{x\in \Om : u(x)>0\}   \le\frac{\meas(\Om)}{2},
    \\
    \meas \{x\in \Om : u(x)<0\}  \le\frac{\meas(\Om)}{2} \,.
\end{gather*}
In this case a Poincar\'e-Wirtinger inequality holds (see e.g. \cite{Z}):
\begin{proposition}
If $u\in W^{1,p}(\Om) $, then
\begin{equation}
\|u-\med (u)\|_{L^p(\Omega)}\le C\|\D u \|_{(L^p(\Omega))^N} \label{poincare}
\end{equation}
where $C$ is a constant depending on $p$, $N$, $\Om$.
\end{proposition}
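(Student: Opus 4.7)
The plan is to argue by contradiction, relying on the Rellich--Kondrachov compact embedding $W^{1,p}(\Om)\hookrightarrow L^p(\Om)$, which is available since $\partial\Om$ is Lipschitz.

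Suppose the claimed inequality fails. Then for each $n\in\N$ there exists $u_n\in W^{1,p}(\Om)$ with $\|u_n-\med(u_n)\|_{L^p(\Om)}> n\,\|\D u_n\|_{(L^p(\Om))^N}$. I would normalize by setting
\[
v_n=\frac{u_n-\med(u_n)}{\|u_n-\med(u_n)\|_{L^p(\Om)}}.
\]
Using that $\med(u-c)=\med(u)-c$ for any constant $c$ and $\med(\lambda u)=\lambda\med(u)$ for any $\lambda>0$, I obtain $\med(v_n)=0$, together with $\|v_n\|_{L^p(\Om)}=1$ and $\|\D v_n\|_{(L^p(\Om))^N}<1/n\to 0$.

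Since $\{v_n\}$ is bounded in $W^{1,p}(\Om)$, up to a subsequence $v_n\to v$ strongly in $L^p(\Om)$ and almost everywhere, while $\D v_n\to 0$ in $(L^p(\Om))^N$. Passing to the limit in the distributional sense yields $\D v\equiv 0$, so by connectedness of the domain $\Om$ one concludes $v\equiv c$ for some constant $c\in\R$. Strong convergence in $L^p$ gives $\|v\|_{L^p(\Om)}=1$, whence $c\neq 0$.

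The main obstacle is to turn the constraint $\med(v_n)=0$ into an obstruction to having $v_n\to c\neq 0$; I would exploit a stability property of superlevel sets under a.e.\ convergence. Assume $c>0$ and fix $\vp\in(0,c)$: then $v_n\to c$ a.e.\ implies that the indicator functions of $\{v_n>\vp\}$ converge to $1$ almost everywhere, so dominated convergence gives $\meas\{v_n>\vp\}\to\meas(\Om)$. In particular $\meas\{v_n>\vp\}>\meas(\Om)/2$ for $n$ large, and the definition of the median forces $\med(v_n)\geq\vp>0$, contradicting $\med(v_n)=0$. The case $c<0$ is symmetric, using $\meas\{v_n<-\vp\}\to\meas(\Om)$ to conclude $\med(v_n)\leq -\vp<0$. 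This yields the desired contradiction and proves \eqref{poincare}.
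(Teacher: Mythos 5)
Your proof is correct. The paper itself does not prove this proposition — it simply cites the literature (\cite{Z}) — so there is no in-paper argument to compare against; your compactness-and-contradiction scheme is the standard proof of Poincar\'e--Wirtinger-type inequalities, and the two points specific to the median version both check out: the equivariance $\med(\lambda u - c) = \lambda\,\med(u) - c$ for $\lambda>0$ (immediate from $\meas\{\lambda u - c > t\} = \meas\{u > (t+c)/\lambda\}$ and the definition \eqref{-520bis}), and the stability step showing that $v_n \to c \neq 0$ a.e.\ forces $\med(v_n)$ to be eventually bounded away from $0$. The only hypotheses you invoke beyond the statement — connectedness of $\Om$ (needed for $\D v \equiv 0 \Rightarrow v$ constant, and genuinely necessary for the inequality) and compactness of the embedding $W^{1,p}(\Om)\hookrightarrow L^p(\Om)$ — are guaranteed by the paper's standing assumption that $\Om$ is a bounded domain with Lipschitz boundary.
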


When the datum $f$ is not an element of the dual space of the classical Sobolev
space $W^{1,p}(\Om)$, the classical notion of weak
solution does not fit. We will refer to the notion of  renormalized solution to \eqref{pb} (see
\cite{DMOP,murat94} for elliptic equations with Dirichlet boundary conditions) which we give below.

In the whole paper, $T_{k}$, $k\ge 0$, denotes the truncation at height $
k$ that is $T_{k}(s)=\min(k,\max(s,-k))$, $\forall s\in\R$. 
%{\bf{DEFINIRE SOLUZIONI RINORMALIZZATE}}
\begin{definition}\label{defrenorm}
  A real function $u$ defined in $\Omega$ is a renormalized solution
  to \eqref{pb} if 
  \begin{gather}
    \label{def1}
    \text{ $u$ is measurable and finite almost everywhere in
      $\Omega$,}
    \\
    \label{def2}
    T_{k}(u)\in W^{1,p}(\Omega), \text{ for any $k>0$,}
    \\
    \label{def3}
    \lim_{n\rightarrow +\infty }\frac{1}{n} \int_{\{ x\in\Omega;\, |u(x)|<n\}}
    \aop(x,\nabla u) \nabla u \,dx = 0
  \end{gather}
  and if for every function $h$ belonging to $W^{1,\infty}(\R)$ with
  compact support  and for every $\varphi\in L^{\infty}(\Omega)\cap
  W^{1,p}(\Omega)$,
  we have
  \begin{multline}
      \int_{\Om} h(u) \aop (x, \D u) \D\varphi
  \,dx + \int_{\Om} h'(u) \aop (x, \D u) \D u   \varphi
  \,dx  \label{def4}\\ 
 +   \int_{\Om} h(u)\Phi (x, u)  \D\varphi \,dx + \int_{\Om} h'(u)\Phi (x, u)  \D u \varphi \,dx =%\\
 \int_\Om f\varphi h(u) \,dx.
  \end{multline}
\end{definition}

\begin{remark} 
   A renormalized solution  is not  in general an
  $L^1_{loc}(\Omega)$-function and therefore it has not a
  distributional gradient. Condition \eqref{def2} allows to define a
  generalized gradient of $u$ according to Lemma 2.1 of \cite{BBGGPV},
  which asserts the existence of a unique measurable function $v$
  defined in $\Omega$ such that $\nabla T_k(u)=\chi_{\{|u|<k  \}}v$
  a.e. in $\Omega$, $\forall k>0$. This function $v$ is the
  generalized gradient of $u$ and it is denoted by $\nabla u$. 

Equality \eqref{def4} is formally obtained by using in \eqref{pb} the
test function $\varphi h(u)$ and  by taking into account Neumann boundary
conditions. Actually in a standard way one can  check that every term
in \eqref{def4} is well-defined under the structural assumptions on
the elliptic operator. 
\end{remark}

Let us recall Theorem 4.1 of \cite{BGM1}; under assumptions
\eqref{ell}-\eqref{comp} there exists at least one renormalized solution $u$
having null median of problem \eqref{pb}. Moreover any renormalized solution to
\eqref{pb} verifies the following proposition
% states well-known properties of
%renormalized solutions; we just give a sketch of the proof

\begin{proposition} \label{prop2.4}
  Under the assumptions  \eqref{ell}-\eqref{comp},  if $u$ denotes any renormalized solution  to problem \eqref{pb}, then
  \begin{equation}
    \label{ermk1}
    \lim_{n\rightarrow +\infty} \frac{1}{n} \int_{\Omega}
    |\Phi(x,u)|\,  |\nabla T_{n}(u)| dx = 0,
  \end{equation}
  \begin{equation}
    \label{ermk2}
   \int_{\Omega} \frac{ |\nabla u|^p}{(1+|u|)^{1+m}}\, dx\le C\, , \qquad \forall m>0,
  \end{equation}
where $C$ is a positive constant depending only on $m$, $f$, $\Omega$, $\alpha$
and $\Phi$
\begin{equation}
 \label{ermk4}
     |  u|^{p-1} \in L^q(\Omega), \forall \,1<q<\frac{N}{N-p},
  \end{equation}
  \begin{equation}
      \label{ermk3}
     |\nabla  u|^{p-1} \in L^q(\Omega), \forall \,1<q<\frac{N}{N-1}.
  \end{equation}

\end{proposition}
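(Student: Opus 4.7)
The plan is to prove these four integrability properties in the order \eqref{ermk2}, \eqref{ermk4}, \eqref{ermk3}, \eqref{ermk1}: first I get a weighted gradient bound via a suitable test in the renormalized formulation, then Marcinkiewicz-type arguments \`a la Boccardo--Gallou\"et give the pointwise summabilities, and finally these are combined with \eqref{def3} to produce the vanishing limit.

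For the weighted gradient estimate \eqref{ermk2}, I would test \eqref{def4} with
\[
h(s)=h_n(s):=h_0(s/n),\qquad \varphi = \Psi_m\bigl(T_{2n}(u)\bigr),
\]
where $h_0\in W^{1,\infty}(\R)$ is a standard cutoff with $h_0\equiv 1$ on $[-1,1]$, $\mathrm{supp}\,h_0\subset[-2,2]$, $|h_0'|\le 1$, and $\Psi_m(s)=\int_0^s(1+|\sigma|)^{-1-m}\,d\sigma$ is a bounded odd primitive. The principal term produces, by coercivity \eqref{ell}, the left-hand side $\alpha\int_{\{|u|<n\}}|\D u|^p/(1+|u|)^{1+m}\,dx$. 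The $h_n'(u)$-contribution is supported on $\{n<|u|<2n\}$ and is controlled by $\frac{\|\Psi_m\|_\infty}{n}\int_{\{|u|<2n\}}\aop\cdot\D u\,dx$, hence remains bounded (and in fact tends to $0$) by \eqref{def3}. The $\Phi$-terms are absorbed into the principal part via Young's inequality and the growth \eqref{growthphi}, leaving residuals of the form $\int c^{p'}(1+|u|)^{p-1-m}\,dx$ that are finite for $m>0$ small by H\"older with $c\in L^t$, $t\ge N/(p-1)$, together with the raw Marcinkiewicz summability of $u$ that \eqref{def3} already yields. Letting $n\to\infty$ produces \eqref{ermk2}.

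The summabilities \eqref{ermk4}, \eqref{ermk3} follow from \eqref{ermk2} in a standard way: \eqref{ermk2} implies $\int_{\{|u|<k\}}|\D u|^p\,dx\le C(1+k)^{1+m}$; up to translation we may assume $\med(u)=0$, whence $\med(T_k(u))=0$, and the Poincar\'e--Wirtinger inequality \eqref{poincare} combined with Sobolev's embedding gives $k^{p^*}|\{|u|>k\}|\le\|T_k(u)\|_{L^{p^*}}^{p^*}\le C(1+k)^{(1+m)p^*/p}$, yielding the Marcinkiewicz decay behind \eqref{ermk4} after letting $m\to 0^+$. The level-set decomposition $|\{|\D u|>\lambda\}|\le|\{|u|>k\}|+\lambda^{-p}\int_{\{|u|<k\}}|\D u|^p\,dx$ optimized in $k=k(\lambda)$ then gives \eqref{ermk3}. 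For \eqref{ermk1}, Young's inequality and \eqref{growthphi} give
\[
\frac{1}{n}\int_\Omega|\Phi(x,u)||\D T_n(u)|\,dx \le \frac{\varepsilon}{\alpha n}\int_{\{|u|<n\}}\aop\cdot\D u\,dx + \frac{C_\varepsilon}{n}\int_{\{|u|<n\}}c(x)^{p'}(1+|u|)^p\,dx,
\]
the first term vanishing by \eqref{def3}, the second handled by splitting $\{|u|<n\}$ at a level $n^\gamma$ and using the Marcinkiewicz decay \eqref{ermk4} with the borderline integrability $c\in L^t$, $t\ge N/(p-1)$, to obtain a vanishing contribution.

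The main obstacle is Step 1: the bounded primitive $\Psi_m$ exists only for $m>0$, which is precisely what forces the weight $(1+|u|)^{-(1+m)}$ in \eqref{ermk2}; simultaneously, the Young absorption of the $\Phi$-term is dictated by the threshold $t\ge N/(p-1)$, the residual $\int c^{p'}(1+|u|)^{p-1-m}\,dx$ being finite because the exponent $p-1-m$ is strictly below the critical $p-1$ matching the Marcinkiewicz summability of $|u|^{p-1}$. Step 3 is delicate for the same reason: the exponent book-keeping in the $n^\gamma$-splitting must close, which it does exactly when $c$ lies on the threshold $L^{N/(p-1)}$.
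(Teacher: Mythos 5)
Your proposal follows essentially the same route as the paper for the core estimates: the test pair $h=h_n$, $\varphi=\Psi_m(T_{2n}(u))=\int_0^{T_{2n}(u)}(1+|s|)^{-1-m}ds$ is exactly the paper's choice for \eqref{ermk2}, and the derivation of \eqref{ermk4} and \eqref{ermk3} from \eqref{ermk2} via Poincar\'e--Wirtinger, Sobolev embedding and a level-set (equivalently, H\"older) decomposition is the same Boccardo--Gallou\"et argument the paper sketches. The one genuine difference is the logical position of \eqref{ermk1}: the paper establishes it \emph{first} (deferring the proof to Remark 2.4 of \cite{BGM1}) and uses it to dispose of the term $\int_\Omega h_n'(u)\Phi(x,u)\cdot\nabla u\,\varphi\,dx$ in the derivation of \eqref{ermk2}, whereas you prove \eqref{ermk1} \emph{last} and instead close the estimate for \eqref{ermk2} by Young's inequality together with a preliminary Marcinkiewicz bound on $u$ extracted directly from \eqref{def3}. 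This reordering is legitimate and has the merit of making the proposition self-contained; but the preliminary bound $\meas\{|u|>n\}\le C(\varepsilon_n n)^{p^{*}/p}n^{-p^{*}}$ with $\varepsilon_n\to 0$, obtained by applying \eqref{ell}, \eqref{poincare} and Sobolev to $T_n(u)$, is doing real work (it is what guarantees $\int_\Omega c^{p'}(1+|u|)^{p-1-m}\,dx<\infty$ and replaces the paper's appeal to \eqref{ermk1}), so it should be written out rather than asserted.

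One step of your argument for \eqref{ermk1} needs care. On the outer region $\{n^{\gamma}<|u|<n\}$ the crude pointwise bound $(1+|u|)^{p}\le Cn^{p}$ combined with $\meas\{|u|>n^{\gamma}\}^{(N-p)/N}\lesssim n^{-\gamma(p-1)}$ only produces a contribution of order $n^{(p-1)(1-\gamma)}$, which does not vanish for $\gamma<1$; the naive book-keeping does not close. The correct pairing is H\"older with exponents $N/p$ and $N/(N-p)$, noting that $c\in L^{N/(p-1)}(\Omega)$ means exactly $c^{p'}\in L^{N/p}(\Omega)$: the Marcinkiewicz decay of \eqref{ermk4} gives
\begin{equation*}
\bigl\|(1+T_{n}(|u|))^{p}\bigr\|_{L^{N/(N-p)}(\Omega)}\le C\,n ,
\end{equation*}
while $\|c^{p'}\|_{L^{N/p}(\{|u|>n^{\gamma}\})}\to 0$ by absolute continuity of the integral, so that after division by $n$ the outer contribution is $o(1)$; the inner region $\{|u|<n^{\gamma}\}$ is then handled as you indicate. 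With that adjustment your argument is complete.
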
 
 
 \noindent{\sl Sketch of the proof.} For the proof of  \eqref{ermk1} see Remark 2.4 of \cite{BGM1}. 

\noindent  The estimate \eqref{ermk2} is related to the Boccardo-Gallou\" et estimates \cite{BG1}, and it is obtained through a usual process. Indeed since $m>0$, 
$\disp \int_0^r\frac{ds}{(1+|s|)^{1+m}}\in L^\infty(\R)\cap \mathcal{C}^{1}(\R)$. Defining $h_n$ by
\begin{equation}  \label{h_n}
h_n(s)=
   \begin{cases}
       0    &\quad \text{if } |s|>2n, \\
\displaystyle\frac{2n-|s|}{n}     &\quad  \text{if } n<|s|\le 2n,\\
1     &\quad \text{ if } |s|\le n\,,
 \end{cases}
\end{equation}
we can use the renormalized formulation \eqref{def4} with $h=h_n$ and $\disp
\varphi=\int_0^{T_{2n}(u)}\frac{ds}{(1+|s|)^{1+m}}$. In view of \eqref{def3} and
\eqref{ermk1}, the growth condition \eqref{growthphi} on $\Phi$ allows one to
pass to the limit as $n \to +\infty$ and to obtain \eqref{ermk2}.

%As far as \eqref{ermk3} 

\noindent As far as \eqref{ermk4} and \eqref{ermk3} are concerned, it is sufficient to observe that
\eqref{def2}, \eqref{ermk2} and Poincar\'e-Wirtinger inequality imply (through
an approximation process) that $\disp
\int_0^u\frac{ds}{(1+|s|)^{\frac{1+m}{p}}}\in W^{1,p}(\Omega)$. Then Sobolev
embedding Theorem leads to
$$\forall m>0,\, |u|^{\frac{p-(1+m)}{p}}\in L^{\frac{Np}{N-p}}(\Omega)$$ 
which is equivalent to
$$\, |u|^{p-1}\in L^{q}(\Omega),\,\, \forall\, 1\le q< \frac{N}{N-p}.$$ 
Using again that $\disp
\int_0^u\frac{ds}{(1+|s|)^{\frac{1+m}{p}}}\in W^{1,p}(\Omega)$, \eqref{ermk4}
and H\"older inequality allow one to deduce \eqref{ermk3}.

 \section{Uniqueness results for weak solution}
\label{section_weak}
 % in the dual space $L^{(p^{*})'}(\Omega)$}

In this section we assume that the right-hand side $f$ is an element of the dual space $L^{(p^{*})'}(\Omega)$. In \cite{BGM1} an existence result for weak solution to problem \eqref{pb} having null median has been proved. Such a  weak solution $u$ is  a function such that
  \begin{gather*}
    u \in W^{1,p}(\Omega), \\
    \int_{\Omega} \aop(x,\nabla u)\nabla v dx +\int_{\Omega}
    \Phi(x,u) \nabla v dx =\int_{\Omega} fv  dx,
  \end{gather*}
  for any $v\in W^{1,p}(\Omega)$.

In this section we assume a suitable growth condition on $\Phi$, that is a bound on $\tau$ in \eqref{lipphi} is assumed and  the following assumption on the datum is made
\begin{equation}
  \label{dat}
  f\in  L^{(p^{*})'}(\Omega)\,. 
\end{equation}

Now we prove two uniqueness results depending on the values of $p$:

\begin{theorem} \label{uniq_weak1} Let $1<p< 2$. Assume that
 \eqref{ell}--\eqref{lipphi} 
  with 
 \begin{equation}\label{tau2}
 \tau\le p-1
     \end{equation}
  and \eqref{comp}, \eqref{dat} hold.
  If $u,v$ are two weak solutions to 
  problem \eqref{pb} having $\med(u)= \med(v)=0$, then $u=v$ a.e. in $\Omega$.
\end{theorem}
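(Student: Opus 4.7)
\emph{Proposed plan.} I follow the Artola-type strategy outlined in the Introduction, adapted to the sub-quadratic range $1<p<2$ via weighted gradient estimates. Let $u,v$ be two weak solutions with $\med(u)=\med(v)=0$. Subtracting the weak formulations and testing with $T_k(u-v)\in W^{1,p}(\Omega)\cap L^\infty(\Omega)$ gives
\begin{equation*}
\int_\Omega\bigl(\aop(x,\D u)-\aop(x,\D v)\bigr)\cdot\D T_k(u-v)\,dx
= -\int_\Omega\bigl(\Phi(x,u)-\Phi(x,v)\bigr)\cdot\D T_k(u-v)\,dx.
\end{equation*}
The strong monotonicity \eqref{mon} (first branch, $1<p\le 2$) bounds the left-hand side from below by $\beta\int_\Omega |\D T_k(u-v)|^2/(|\D u|+|\D v|)^{2-p}\,dx$, while the Lipschitz control \eqref{lipphi} together with $|T_k(u-v)|\le k$ bounds the right-hand side by $\int_\Omega c(x)(1+|u|+|v|)^\tau |T_k(u-v)|\,|\D T_k(u-v)|\,dx$. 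A weighted Young inequality with weight $(|\D u|+|\D v|)^{2-p}$ absorbs half of the monotonicity term into the left, yielding
\begin{equation*}
\int_\Omega\frac{|\D T_k(u-v)|^2}{(|\D u|+|\D v|)^{2-p}}\,dx \le C\int_{\{|u-v|<k\}} c^2(1+|u|+|v|)^{2\tau}(|\D u|+|\D v|)^{2-p}|T_k(u-v)|^2\,dx.
\end{equation*}

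The restriction $\tau\le p-1$ on the Lipschitz exponent, the summability $c\in L^{N/(p-1)}(\Omega)$ and the Sobolev embedding $u,v\in L^{p^{*}}(\Omega)$ together place the kernel $c^2(1+|u|+|v|)^{2\tau}(|\D u|+|\D v|)^{2-p}$ in $L^1(\Omega)$ through H\"older's inequality; since $|T_k(u-v)|\le k$, dividing by $k^2$ and applying dominated convergence makes the right-hand side above tend to $0$. The H\"older interpolation
\begin{equation*}
\int_\Omega |\D T_k(u-v)|^p\,dx \le\left(\int_\Omega\frac{|\D T_k(u-v)|^2}{(|\D u|+|\D v|)^{2-p}}\,dx\right)^{p/2}\left(\int_{\{|u-v|<k\}}(|\D u|+|\D v|)^p\,dx\right)^{1-p/2}
\end{equation*}
then upgrades this to $k^{-p}\int_\Omega |\D T_k(u-v)|^p\,dx\to 0$ as $k\to 0^+$. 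Setting $W_k=T_k(u-v)/k$, I obtain $\|W_k\|_{L^\infty(\Omega)}\le 1$ and $\|\D W_k\|_{L^p(\Omega)}\to 0$. The Poincar\'e--Wirtinger inequality \eqref{poincare} applied to $W_k$ gives $\|W_k-\med(W_k)\|_{L^p(\Omega)}\to 0$, while dominated convergence gives $W_k\to \sign(u-v)$ in $L^p(\Omega)$. Hence $\sign(u-v)$ is a.e.\ equal to a constant, leaving the trichotomy $u=v$, $u>v$, or $u<v$ almost everywhere in $\Omega$.

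The case $u=v$ a.e.\ is the desired conclusion. The two remaining (symmetric) cases are excluded by plugging the specific test function
\begin{equation*}
w_{k,\delta}=\frac{T_k(u-v)}{k}\left(\frac{T_\delta(u^+)}{\delta}-\frac{T_\delta(v^-)}{\delta}\right)\in W^{1,p}(\Omega)\cap L^\infty(\Omega)
\end{equation*}
into the difference of the two weak formulations and passing successively to the limits $\delta\to 0^+$ and then $k\to 0^+$. On $\{u>v\}$ the scalar factor tends pointwise to $\chi_{\{u>0\}}-\chi_{\{v<0\}}$; the extra terms produced by the product rule on $w_{k,\delta}$ are controlled by the $L^p$-smallness of $\D W_k$ established in the previous step, while the median constraints $|\{u>0\}|,|\{v<0\}|\le|\Omega|/2$ together with $u>v$ a.e.\ pin down the structure of these two level sets and allow one to extract a contradiction from the surviving principal-part and $\Phi$-contributions. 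I expect this final step, i.e.\ the careful bookkeeping of the joint $(\delta,k)$-limit of every term generated by $w_{k,\delta}$ and the extraction of a contradiction from the median hypothesis, to be the main technical obstacle; it is the genuinely novel ingredient of the Neumann framework, where the plain Poincar\'e inequality available in the Dirichlet setting is replaced by a two-scale argument.
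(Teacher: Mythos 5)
Your Steps 1 and 2 coincide with the paper's argument: the weighted monotonicity/Cauchy--Schwarz estimate, the $L^1$-integrability of the kernel $c^2(1+|u|+|v|)^{2\tau}(|\D u|+|\D v|)^{2-p}$ under $\tau\le p-1$, the H\"older interpolation back to the $L^p$-norm of $\D T_k(u-v)$, and the Poincar\'e--Wirtinger trichotomy are all exactly as in the paper, and they are correct.

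Step 3, however, contains a genuine gap, and you flag it yourself by deferring ``the main technical obstacle'' to an unexecuted computation. You propose to insert $w_{k,\delta}$ as a test function into the difference of the two weak formulations and to extract a contradiction from ``the surviving principal-part and $\Phi$-contributions''. That is not how the argument closes, and it is unlikely to close that way: the equation has already been fully exploited in Step 1, and testing it again with $w_{k,\delta}$ only produces another identity equal to zero, with no sign information in the cross terms $(\aop(x,\D u)-\aop(x,\D v))\cdot\frac{T_k(u-v)}{k\delta}\D u\,\chi_{\{0<u<\delta\}}$ from which a contradiction could be read off. The paper's mechanism is different and purely functional-analytic: assuming $u>v$ a.e., one first shows from $\med(u)=\med(v)=0$ that $\meas\{u>0,\ v<0\}=0$ (``$u$ and $v$ have the same sign''); this identifies $\{w_{k,\delta}>0\}=\{u>0\}$ and $\{w_{k,\delta}<0\}=\{v<0\}$ and hence gives $\med(w_{k,\delta})=0$. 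One then applies the Poincar\'e--Wirtinger inequality to $w_{k,\delta}$ itself, $\int_\Omega|w_{k,\delta}|^p\,dx\le C\int_\Omega|\D w_{k,\delta}|^p\,dx$, shows via the product rule, dominated convergence in $\delta$, and Step 1 in $k$ that the right-hand side vanishes in the iterated limit $\delta\to0$ then $k\to0$, and concludes $\chi_{\{u>0\}}=\chi_{\{v<0\}}$ a.e.; combined with the same-sign property this forces both sets to be null and contradicts $u>v$. So the missing idea is that $w_{k,\delta}$ is not a test function for the PDE at all but the object to which Poincar\'e--Wirtinger is applied, with the median-zero property of $w_{k,\delta}$ (which you never establish) being the key structural fact.
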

\smallskip

\begin{theorem} \label{uniq_weak2} Let $p\ge 2$. Assume that
%  \eqref{ell}, 
  %\eqref{mon}, 
 \eqref{ell}--\eqref{lipphi} 
  with 
 \begin{equation}\label{tau}
\tau\le \frac{Np}{N-p}\left(\frac 12-\frac{ 1}{t} \right),
 \end{equation}
 \begin{equation}\label{t}
 t\ge \max\left\{2, \frac{N }{p-1}\right\} 
 \end{equation}
  and \eqref{comp}, \eqref{dat} hold.
  If $u,v$ are two weak solutions to 
  problem \eqref{pb} having $\med(u)=\med(v)=0$, then $u=v$ a.e. in $\Omega$.
\end{theorem}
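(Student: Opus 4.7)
The plan is to follow closely the Neumann adaptation of Artola's argument sketched in the introduction. First I would take two weak solutions $u,v$ with $\med(u)=\med(v)=0$, subtract their weak formulations, and test with $T_k(u-v)\in W^{1,p}(\Omega)\cap L^\infty(\Omega)$, obtaining
\begin{equation*}
\int_{\{|u-v|<k\}}\bigl[\aop(x,\nabla u)-\aop(x,\nabla v)\bigr]\cdot\nabla(u-v)\,dx = -\int_{\{|u-v|<k\}}\bigl[\Phi(x,u)-\Phi(x,v)\bigr]\cdot\nabla(u-v)\,dx.
\end{equation*}
On the left, strong monotonicity \eqref{mon} in the $p\ge 2$ form bounds the integrand below by $\beta|\nabla(u-v)|^2(1+|\nabla u|+|\nabla v|)^{p-2}$. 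On the right I would use \eqref{lipphi} together with Young's inequality, splitting the weight as $(1+|\nabla u|+|\nabla v|)^{(p-2)/2}\cdot(1+|\nabla u|+|\nabla v|)^{-(p-2)/2}$; the reciprocal factor is harmlessly bounded by $1$ precisely because $p\ge 2$. Keeping the factor $|u-v|^2$ rather than crudely bounding it by $k^2$, and observing $\nabla(u-v)=0$ a.e.\ on $\{u=v\}$ by Stampacchia, absorption into the left yields
\begin{equation*}
\int_\Omega|\nabla T_k(u-v)|^2(1+|\nabla u|+|\nabla v|)^{p-2}\,dx \le Ck^2\int_{\{0<|u-v|<k\}} c(x)^2(1+|u|+|v|)^{2\tau}\,dx.
\end{equation*}

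The next step extracts smallness in $k$. By H\"older together with $c\in L^t(\Omega)$ and the Sobolev embedding $W^{1,p}(\Omega)\hookrightarrow L^{p^*}(\Omega)$, the integrand $c^2(1+|u|+|v|)^{2\tau}$ lies in $L^1(\Omega)$ precisely when $\frac{2}{t}+\frac{2\tau}{p^*}\le 1$, i.e.\ $\tau\le \frac{p^*}{2}(1-\frac{2}{t})$, which is exactly hypothesis \eqref{tau} (with \eqref{t} ensuring $t\ge 2$). Since $|\{0<|u-v|<k\}|\to 0$ as $k\to 0^+$, absolute continuity of the integral yields
\begin{equation*}
\frac{1}{k^2}\int_\Omega|\nabla T_k(u-v)|^2(1+|\nabla u|+|\nabla v|)^{p-2}\,dx \longrightarrow 0 \qquad\text{as } k\to 0^+.
\end{equation*}
Since $(1+|\nabla u|+|\nabla v|)^{p-2}\ge 1$ for $p\ge 2$, this forces $\|\nabla w_k\|_{L^2(\Omega)}\to 0$ for $w_k:=T_k(u-v)/k$. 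The sequence $(w_k)$ is uniformly bounded by $1$ in $L^\infty$ and $w_k\to\sign(u-v)$ a.e. The Poincar\'e--Wirtinger inequality in $H^1(\Omega)$ (applicable because $p\ge 2$ embeds $W^{1,p}\hookrightarrow H^1$ on the bounded domain) combined with dominated convergence then forces $\sign(u-v)$ to be a.e.\ constant in $\{-1,0,1\}$. Hence either $u=v$ a.e.\ (and we are done), or $u>v$ a.e., or $u<v$ a.e.

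To finish I would rule out the two strict cases; by symmetry assume $u>v$ a.e. Combined with the zero-median conditions, this forces $\{u>0\}=\{v\ge 0\}$ and $\{u\le 0\}=\{v<0\}$ up to null sets, each of measure $|\Omega|/2$, which is already quite rigid. I would then plug the test function
\begin{equation*}
w_{k,\delta}=\frac{T_k(u-v)}{k}\Bigl(\frac{T_\delta(u^+)}{\delta}-\frac{T_\delta(v^-)}{\delta}\Bigr)\in W^{1,p}(\Omega)\cap L^\infty(\Omega)
\end{equation*}
from the introduction into the subtracted weak formulation, expand $\nabla w_{k,\delta}$ by the product rule, and pass to the iterated limits $\delta\to 0^+$ first and $k\to 0^+$ afterward. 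Under $u>v$ a.e., $T_k(u-v)/k\to 1$, while $\nabla(T_\delta(u^+)/\delta)$ and $\nabla(T_\delta(v^-)/\delta)$ concentrate respectively on $\{u=0\}$ and $\{v=0\}$. \emph{This last step is the main obstacle}: the accounting of which cross-terms vanish (those involving $\nabla u$ or $\nabla v$ on their own level sets, by Stampacchia) versus the single contribution that survives must be carried out carefully, using the monotonicity, \eqref{lipphi}, and the integrability granted by \eqref{tau}--\eqref{t}. The surviving term produces a strict inequality incompatible with $u>v$ a.e., while the preceding absorption argument and Poincar\'e--Wirtinger reduction are essentially routine once the correct weighted Young split is identified.
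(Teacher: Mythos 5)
Your Steps 1 and 2 match the paper's: the same test function $T_k(u-v)$, the same use of the $p\ge 2$ monotonicity to get a lower bound by $\beta\int(1+|\nabla u|+|\nabla v|)^{p-2}|\nabla T_k(u-v)|^2$, the same H\"older/Young bookkeeping showing that \eqref{tau}--\eqref{t} make $c^2(1+|u|+|v|)^{2\tau}$ integrable via $u,v\in L^{p^*}(\Omega)$, the vanishing of $\frac{1}{k^2}\int|\nabla T_k(u-v)|^2$, and the Poincar\'e--Wirtinger trichotomy $u=v$, $u>v$, or $u<v$ a.e. Up to that point the argument is correct and essentially identical to the paper's.

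The gap is in Step 3, which you flag yourself as ``the main obstacle'' and do not complete, and the route you sketch for it is not the one that works. You propose to plug $w_{k,\delta}=\frac{T_k(u-v)}{k}\bigl(\frac{T_\delta(u^+)}{\delta}-\frac{T_\delta(v^-)}{\delta}\bigr)$ into the subtracted weak formulation and hunt for a sign-definite surviving term. The paper never tests the equation with $w_{k,\delta}$ at all; after Step 1 the equation plays no further role. The actual argument is purely functional: assuming $u>v$ a.e., the median conditions force $\meas\{u>0,\ v<0\}=0$ (``$u$ and $v$ have the same sign''), and this sign structure together with $\med(u)=\med(v)=0$ implies $\med(w_{k,\delta})=0$. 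One then applies Poincar\'e--Wirtinger to $w_{k,\delta}$ itself, $\int_\Omega|w_{k,\delta}|^p\,dx\le C\int_\Omega|\nabla w_{k,\delta}|^p\,dx$, and shows by dominated convergence (first $\delta\to0$, then $k\to0$, using Step 1 and the domination of the terms supported on $\{0<u<\delta\}$ and $\{-\delta<v<0\}$ by $\frac1k|\nabla u|$, $\frac1k|\nabla v|$) that the right-hand side vanishes. The left-hand side converges to $\int_\Omega|\sign(u-v)(\chi_{\{u>0\}}-\chi_{\{v<0\}})|^p\,dx$, so $\chi_{\{u>0\}}=\chi_{\{v<0\}}$ a.e.; since these sets are essentially disjoint by the same-sign property, both are null, contradicting $u>v$ a.e. together with the medians. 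Without this idea --- that $w_{k,\delta}$ is built precisely so that its median is zero and its gradient is controlled by Step 1 --- the contradiction is not reached, so as written the proof is incomplete.
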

\smallskip

\begin{remark}\rm
We explicitely  observe that if $ 2\le p\le \frac{N+2}{2}$, we have uniqueness results under the assumption that $c$ belongs to $L^{\frac{N}{p-1}}(\Omega)$ assumption which guarantees the existence of a solution. If $ p> \frac{N+2}{2}$ the uniqueness result holds if $c$ belongs to $L^{2}(\Omega)$, which means that uniqueness result holds under a stronger assumption on the summability of $c$.
\end{remark} 
\smallskip

\begin{remark}\rm
Let us observe that the bounds on $\tau$ in the two theorems overlaps when $p=2$. 
\end{remark} 
\smallskip

%\noindent {\sl Proof of Theorem \ref{uniq_weak1}} 
\begin{proof}[Proof of Theorem \ref{uniq_weak1}]
Since for every fixed $k>0$, $T_k(u-v)\in W^{1,p}(\Omega)$, it can be used as test function in the equation satisfied by $u$ and in the equation satisfied by $v$. Then by subtracting the two equations, we get
\begin{gather}\label{iniz}
 \int_\Omega (\aop(x, \nabla u)- \aop(x, \nabla v))\cdot \nabla T_k(u-v)\, dx   \\
\notag \qquad \qquad     + \int_\Omega (\Phi(x,  u)- \Phi(x,  v))\cdot \nabla T_k(u-v)\, dx=0\,.
\end{gather}

We proceed by dividing the proof by steps.\smallskip

\noindent {\sl Step 1.} We prove that
\begin{equation} \label{step1fin}
\lim_{k\to 0}\frac{1}{k^p}\int_\Omega  |\nabla T_k(u-v)|^p\, dx  =0.  
\end{equation} 
By the assumptions on the strong monotonicity on the operator \eqref{mon} and the local Lipschitz condition on $\Phi$ \eqref{lipphi} with $\tau$ which satisfies \eqref{tau2}, we get
 \begin{gather}\label{iniz2b}
 \beta\int_\Omega \frac{|\nabla T_k(u-v)|^2}{(|\nabla u|+|\nabla v|)^{2-p} }\, dx  \\
 \notag     
\le k \int_\Omega c(x) (1+|u|+|v|)^\tau|\nabla T_k(u-v)|\, dx\,.
\end{gather}
The assumption on $\tau$ assures that the right-hand side of the previous inequality is finite. Moreover by H\"older inequality and assumption on $\tau$, we obtain
\begin{gather}\label{iniz3b}
\beta \int_\Omega \frac{|\nabla T_k(u-v)|^2}{(|\nabla u|+|\nabla v|)^{2-p} }\, dx  \qquad\qquad \\
\notag \qquad \qquad    \le k \left (  \int_{\{0<|u-v|<k\}}  c(x)^2 (1+|u|+|v|)^{2\tau} (|\nabla u|+|\nabla v|)^{2-p}\, dx  \right )^\frac12\\
\notag\times \left (    \int_\Omega \frac{|\nabla T_k(u-v)|^2}{(|\nabla u|+|\nabla v|)^{2-p} }\, dx   \right)^\frac12
\end{gather}
i.e.
\begin{gather}\label{iniz4b}
\frac{\beta^2}{k^{2}} \int_\Omega \frac{|\nabla T_k(u-v)|^2}{(|\nabla u|+|\nabla v|)^{2-p} }\, dx  \\
\notag \qquad \qquad    \le \int_{\{0<|u-v|<k\}}  c(x)^2 (1+|u|+|v|)^{2\tau} (|\nabla u|+|\nabla v|)^{2-p}\, dx .
\end{gather}
Since $\tau\le p-1=(1-\frac 1p-\frac{p-1}{N})\frac{Np}{N-p}$, H\"older
inequality assures that the integral in the right-hand side is finite.
%%%%% precisare con stima norme??

\noindent  Since $\chi_{\{0<|u-v|<k\}}$ tends to 0 a.e. in $\Omega$ as $k$ goes
to 0, this implies 
\begin{equation}\label{step0b}
\lim_{k\to 0}\frac1{k^2} \int_\Omega \frac{|\nabla T_k(u-v)|^2}{(|\nabla u|+|\nabla v|)^{2-p} }\, dx   =0  .
\end{equation} 
Moreover by H\"older inequality we get
%\begin{gather}\label{step1b}
%\int_\Omega  |\nabla T_k(u-v)|^p\, dx  \\
%\notag\qquad\qquad \le \left(\int_\Omega \frac{|\nabla T_k(u-v)|^2}{(|\nabla u|+|\nabla v|)^{2-p} }\, dx  \right)^\frac{p}{2}
 %\left(\int_\Omega (|\nabla u|+|\nabla v|)^{p} \, dx  \right)^{1-\frac{p}{2}}
%\end{gather} 
%and therefore
\begin{gather}\label{step1bb}
\frac{1}{k^p}\int_\Omega  |\nabla T_k(u-v)|^p\, dx  \\
\notag\qquad\qquad \le \left(\frac 1 {k^2}\int_\Omega \frac{|\nabla T_k(u-v)|^2}{(|\nabla u|+|\nabla v|)^{2-p} }\, dx  \right)^\frac{p}{2}
%\times\\ \notag\qquad\qquad
%\times
 \left(\int_\Omega (|\nabla u|+|\nabla v|)^{p} \, dx  \right)^{1-\frac{p}{2}}
\end{gather} 
which implies \eqref{step1fin} by \eqref{step0b}.
\medskip

\noindent {\sl Step 2.} We prove that either
\begin{equation*}  
   \begin{cases}
       u=v    &\quad \text{a.e. in } \Omega, \\
  u<v    &\quad \text{a.e. in } \Omega,\\
u>v    &\quad \text{a.e. in } \Omega.
 \end{cases}
\end{equation*}
%$$u=v\,, \quad \hbox{a.e. in }\Omega\,\quad or  \quad u<v\,, \quad \hbox{a.e. in }\Omega\,\quad or\quad u>v\,, \quad \hbox{a.e. in }\Omega\,
%$$

Since $\frac{T_{k}(u-v)}{k}$ belongs to $W^{1,p}(\Omega)$ Poincar\'
e-Wirtinger inequality yields
\begin{gather}\label{PWTk}
\int_{\Omega}\left |  \frac{T_k(u-v)}{k}  - \med \left(\frac{T_k(u-v)}{k} \right ) \right |^p\, dx\\
\notag\le C
\int_{\Omega}
\left |  \frac {\nabla T_k(u-v)|}{k}
\right |^p\, dx  \,.
\end{gather} 
Therefore, by Step 1,  we deduce that
\begin{equation}\label{lim}
  \lim_{k\to 0}  \int_{\Omega}\left |
 \frac{T_k(u-v)}{k}  - \med \left(\frac{T_k(u-v)}{k} \right ) \right |^p\, dx=0. 
\end{equation} 
Since $\left|  \frac{T_k(u-v)}{k}  \right|\le 1$, we obtain
$$
\left| \med \left(\frac{T_k(u-v)}{k}\right )\right | \le 1\,, \,\,  k>0
$$
and, up to a subsequence, by  \eqref{lim}
$$
\lim_{k\to 0}  \med \left(\frac{T_k(u-v)}{k}\right )=\gamma
$$
 for a suitable constant $\gamma \in \R,\, |\gamma | \le 1$. On the other hand, we have
$$
\lim_{k\to 0} \frac{T_k(u-v)}{k}=\hbox{sign }(u-v)\,.
$$
Therefore, up to subsequence, by \eqref{lim} we get
$$
\int_\Omega |\hbox{sign }(u-v)-\gamma|^p\, dx=0
$$
which implies
$$
\gamma=0\,  \qquad \hbox{or}\qquad \gamma=-1\, \qquad \hbox{or}\qquad \gamma=1\,.
$$
This means that  either
\begin{equation*}  
   \begin{cases}
       u=v    &\quad \text{a.e. in } \Omega, \\
  u<v    &\quad \text{a.e. in } \Omega,\\
u>v    &\quad \text{a.e. in } \Omega.
 \end{cases}
\end{equation*}
%$$u=v\,, \quad \hbox{a.e. in }\Omega\,\quad or  \quad u<v\,, \quad \hbox{a.e. in }\Omega\,\quad or\quad u>v\,, \quad \hbox{a.e. in }\Omega\,. $$
%In the following step we prove that the last two possibilities can not occur.
\smallskip

\noindent {\sl Step 3.} We prove that $u<v, \, \text{a.e. in } \,\Omega\, \text{or }\,
 u>v\,,  \text{a.e. in }\Omega\,$ can not occur. 

We assume that 
\begin{equation}\label{uv}
u>v\,, \quad \hbox{a.e. in }\Omega\
\end{equation}
and  we prove that this yields a contradiction. The same arguments prove that $u<v$ a.e. in $\Omega$ 
can not be verified.

\noindent Since $\med(v)=0$,  $\disp \meas \{ x\in \Omega\, :\, v(x)< 0 \}\le \frac{\meas ( \Omega)}{2}$, then
\begin{equation}\label{meas}
 \meas \{ x\in \Omega\, :\, v(x)\ge 0 \}\ge \frac{\meas ( \Omega)}{2}\,.
\end{equation} 
On the other hand, we have
$$
\{ x\in \Omega\, :\, u(x)> 0 \}$$
$$
=\{ x\in \Omega\, :\, u(x)> 0\,, v(x)\ge 0 \}\cup \{ x\in \Omega\, :\, u(x)> 0\,, v(x)< 0 \}.
$$
Since we assume  \eqref{uv}, then we deduce 
$$
\{ x\in \Omega\, :\, u(x)> 0\,, v(x)\ge 0 \}=\{ x\in \Omega\, :\,  v(x)\ge 0 \}
$$
Therefore we get 
$$
\{ x\in \Omega\, :\, u(x)> 0 \}$$
$$=\{ x\in \Omega\, :\,  v(x)\ge 0 \}\cup \{ x\in \Omega\, :\, u(x)> 0,\, v(x)< 0 \}.
$$
Moreover, since \eqref{meas} holds true and since 
$$
\meas \{ x\in \Omega\, :\, u(x)> 0 \}\le \frac{\meas ( \Omega)}{2}\,,
$$
we conclude that
$$
\meas\{ x\in \Omega\, :\, u(x)> 0 \,, v(x)<0\}=0\,.
$$
This means that ``$u$ and $v$ have the same sign".

Now let us consider the test function
\begin{equation}\label{natural}
w_{k,\delta}=\frac{T_k(u-v)}{k}\left ( \frac{T_\delta(u^+)}{\delta} -\frac{T_\delta(v^-)}{\delta}  \right)\,,
\end{equation}
for fixed $k>0$, $\delta >0$, where 
$$
u^+=\max \{ 0\,, u \}\, , \qquad v^-=\max \{ 0\,, - v \}.
$$
Since $T_k(u-v)>0$ a.e. in $\Omega$, one can verify that
$$
\{ x\in \Omega\, :\, w_{k,\delta}(x)> 0 \}=\{ x\in \Omega\, :\, u(x)> 0 \}
$$
and
$$
\{ x\in \Omega\, :\, w_{k,\delta}(x)<0 \}=\{ x\in \Omega\, :\, v(x)< 0 \}.
$$
Moreover $T_\delta(u^+)\,, T_\delta(v^-)\in W^{1,p}(\Omega)$ and hence, since $\med\, (u)\,, \med\, (v)=0$, 
we conclude that
$$
\meas \{ x\in \Omega\, :\, w_{k,\delta}(x)> 0 \}\le \frac{\meas ( \Omega)}{2}\,,
$$
$$
\meas \{ x\in \Omega\, :\, w_{k,\delta}(x)< 0 \}\le \frac{\meas ( \Omega)}{2}\,,$$
this means 
$$\med\, (w_{k,\delta})=0\,.
$$
Therefore by Poincar\' e-Wirtinger inequality we deduce
\begin{equation}\label{PWw}
\int_\Omega|w_{k,\delta}|^p\, dx\le C \int_{\Omega} |\nabla w_{k,\delta}|^p\, dx\,.
\end{equation}
We now evaluate the gradient of $w_{k,\delta}$,
\begin{gather}\label{gradw}
\nabla w_{k,\delta}=\frac{\nabla T_k(u-v)}{k} \left ( \frac{T_\delta(u^+)}{\delta} -\frac{T_\delta(v^-)}{\delta}  \right)\\
\notag \qquad \qquad+
\frac{T_k(u-v)}{k}\left (\frac{\nabla u}{\delta} 
\chi_{\{\, 0<u<\delta  \} } + \frac{\nabla v}{\delta} \chi_{\{\, -\delta<v<0  \} }  \right )\, \text{ a.e. in } \Omega.
\end{gather}
Since $u$ and $v$ ``have the same sign", then, for every fixed $k>0$, it results
$$
0<\frac{T_k(u-v)}{k\delta} \chi_{\{\, 0<u<\delta  \} }\le \frac 1k\chi_{\{\, 0<u<\delta  \} }\,,
$$
$$
0<\frac{T_k(u-v)}{k\delta} \chi_{\{\, -\delta<v<0  \} }\le \frac 1k\chi_{\{\, -\delta<v<0  \} }\,,
$$
then for fixed $k>0$, we have 
$$
\lim_{\delta\to 0}\frac{T_k(u-v)}{k\delta} \chi_{\{\, 0<u<\delta  \} }=0 \qquad \hbox{a.e. in }\Omega\,.
$$
$$
\lim_{\delta\to 0}\frac{T_k(u-v)}{k\delta} \chi_{\{\, -\delta<v<0  \} }=0 \qquad \hbox{a.e. in }\Omega\,.
$$
Moreover we have also
$$
\left |   \frac{T_k(u-v)}{k\delta} \nabla u \chi_{\{\, 0<u<\delta  \} }  \right |\le \frac 1k |\nabla u|\chi_{\{\, 0<u<\delta  \} }\,,
$$
$$
\left |   \frac{T_k(u-v)}{k\delta} \nabla v \chi_{\{\, -\delta<v<0  \} }  \right |\le \frac 1k |\nabla v|\chi_{\{\, -\delta<v<0  \}} \,,
$$
and since $|\nabla u|, |\nabla v|\in L^p(\Omega)$,  we can apply Lebesgue dominated convergence Theorem, i.e.
$$
\lim_{\delta \to 0} \int_\Omega|\nabla w_{k,\delta}|^p\, dx=
\int_{\Omega}\left |   \frac{\nabla T_k(u-v)}{k} \left(\chi_{\{\, u>0  \}}  -
\chi_{\{\, v<0  \}} \right)      \right |^p\, dx\,.
$$
Since 
$$
\int_{\Omega}\left |   \frac{\nabla T_k(u-v)}{k} \left(\chi_{\{\, u>0  \}}  -
\chi_{\{\, v<0  \}} \right)      \right |^p\, dx\le 
\int_{\Omega}\left |   \frac{\nabla T_k(u-v)}{k} 
 \right |^p\, dx,
$$
by Step 1, we conclude that 
$$
\lim_{k\to 0}\lim_{\delta \to 0} \int_\Omega|\nabla w_{k,\delta}|^p\, dx=0\,.
$$
Now we can pass to the limit in \eqref{PWw} as $\delta \rightarrow 0$ first  and then as $k\rightarrow 0$ and we get
$$
\int_\Omega \left| \hbox{sign }(u-v) \left ( \chi_{\{\, u>0  \}} - \chi_{\{\,
      v<0  \}}\right)\right |^p\, dx=\int_{\Omega} |\sign(u)|^{p}dx = 0\,.
$$
We deduce that $\chi_{\{\, u>0  \}}=\chi_{\{\, v<0  \}}$ a.e. in $\Omega$; this yields a contradiction since 
we have proved that $u$ and $v$ have the same sign.

The same arguments yield that we can not have $u<v$ a.e. in $\Omega$. The conclusion follows.
\end{proof}

%\noindent  {\sl Proof of Theorem \ref{uniq_weak2}} 
\begin{proof}[Proof of Theorem \ref{uniq_weak2}]

As in the previous proof we arrive to equality \eqref{iniz} and we divide the
proof by 3 steps.

\noindent {\sl Step 1.}  We prove that
\begin{equation}\label{step1}
\lim_{k\to 0}\frac1{k^2} \int_{\Omega}|\nabla T_k(u-v)|^2\, dx =0 .
\end{equation} 
By the assumptions on the strong monotonicity on the operator \eqref{mon} and the local Lipschitz condition on $\Phi$ \eqref{lipphi} with $\tau$ which satisfies \eqref{tau}, we get
\begin{gather}\label{iniz2}
\beta \int_\Omega (1+|\nabla u|+|\nabla v|)^{p-2} |\nabla T_k(u-v)|^2\, dx  \\
\notag \qquad \qquad     \le k \int_\Omega c(x) (1+|u|+|v|)^\tau|\nabla T_k(u-v)|\, dx\,.
\end{gather}
Since $p\ge 2$,   $T_k(u-v)$ belongs to $W^{1,2}(\Omega)$. Then by H\"older inequality and assumption on $\tau$, we obtain
\begin{gather}\label{iniz3}
\beta \int_\Omega |\nabla T_k(u-v)|^2\, dx   \\
\notag \qquad \qquad    \le  k\|c\|_{L^t(\{   0<|u-v|<k \})}
\|1+|u|+|v|\|^{\tau}_{L^{p^*}}\|\nabla T_k(u-v)\|_{L^2}\, dx\,,
\end{gather}
i.e.
\begin{equation}\label{iniz4}
\frac{\beta^2}{k^{2}} \int_\Omega |\nabla T_k(u-v)|^2\, dx   
  \le  \|c\|^2_{L^t(\{   0<|u-v|<k \})}
\|1+|u|+|v|\|^{2\tau}_{L^{p^*}} 
\,.
\end{equation}
Since $\chi_{\{   x\,: \, 0<|u-v|<k \}}\rightarrow 0$ a.e. in $\Omega$, Lebesgue
dominated convergence theorem implies that \eqref{step1} holds.
\smallskip

\noindent {\sl Step 2.} We prove that either
\begin{equation*}  
   \begin{cases}
       u=v    &\quad \text{a.e. in } \Omega, \\
  u<v    &\quad \text{a.e. in } \Omega,\\
u>v    &\quad \text{a.e. in } \Omega.
 \end{cases}
\end{equation*}
%$$u=v\,, \quad \hbox{a.e. in }\Omega\,\quad or  \quad u<v\,, \quad \hbox{a.e. in }\Omega\,\quad or\quad u>v\,, \quad \hbox{a.e. in }\Omega\,.
%$$
By Poincar\' e-Wirtinger inequality, we get
\begin{gather}\label{PWTk_bis}
\int_{\Omega}\left |  \frac{T_k(u-v)}{k}  - \med \left(\frac{T_k(u-v)}{k} \right ) \right |^2\, dx\\
\nonumber \le C
\int_{\Omega}
\left |  \frac {\nabla T_k(u-v)|}{k}
\right |^2\, dx  \,.
\end{gather} 
Therefore, by Step 1. we deduce that
\begin{equation}\label{lim_bis}
\lim_{k\to 0}  \int_{\Omega}\left |  \frac{T_k(u-v)}{k}  - \med \left(\frac{T_k(u-v)}{k} \right ) \right |^2\, dx=0
\end{equation} 
Since $\left|  \frac{T_k(u-v)}{k}  \right|\le 1$, we obtain
$$
\left| \med \left(\frac{T_k(u-v)}{k}\right )\right | \le 1\,, 
$$
and, up to a subsequence, 
$$
\lim_{k\to 0}  \med \left(\frac{T_k(u-v)}{k}\right )=\gamma
$$
for a suitable constant $\gamma \in \R,\, |\gamma|\le 1$. On the other hand, we have
$$
\lim_{k\to 0} \frac{T_k(u-v)}{k}=\hbox{sign }(u-v)\,,
$$
Therefore, up to subsequence, we get
$$
\int_\Omega |\hbox{sign }(u-v)-\gamma|^2\, dx=0
$$
which implies
$$
\gamma=0\,  \qquad \hbox{or}\qquad \gamma=-1\, \qquad \hbox{or}\qquad \gamma=1\,.
$$
This means that  either
$$
u=v\,, \quad \text{a.e. in }\Omega\,\quad \text{or}  \quad u<v\,, \quad \text{a.e. in }\Omega\,\quad \text{or}
\quad u>v\,, \quad \text{a.e. in }\Omega\,.
$$
\smallskip
\noindent {\sl Step 3.} Arguing as in Step 3 of the previous theorem, we prove that the last two possibilities can not occur.
Then conclusion follows.
\end{proof}

\begin{remark}
In \cite{BGM1} we estabilished the existence of a weak solution when
$\aop(x,\xi)$ is replaced by a Leray-Lions operator $\aop(x,r, \xi)$ which
depends on $x$, $r$ and $\xi$ and verifies the standard conditions (see
\cite{LL}). In the Dirichlet case and  $1<p\le 2$ it is well known (see
\cite{BGM92} \cite{ChiMi}) that under suitable assumptions on $\aop(x,r, \xi)$
the weak solution is unique.

\noindent In view of the proofs of Theorem \ref{uniq_weak1} and Theorem
\ref{uniq_weak2} it is possible to obtain the uniqueness of the weak solution
having null median of the problem  
\begin{equation} \label{pbcompleto}
\left\{
\begin{array}{lll}
-\mbox{div}\left( \aop\left( x, u, \nabla u\right)+ \Phi (x,u) \right) =f &  &
\text{in}\ \Omega, \\
 \left( \aop\left( x, u, \nabla u\right)+ \Phi (x,u) \right)\cdot\underline n=0& & \text{on}\ \partial \Omega.%
\end{array}%
\right. 
\end{equation}%
If we assume that $\aop(x,r, \xi)$
 is a Carath\'eodory 
function which verifies
\begin{gather}
  \label{ell_comp}
  \aop(x,s, \xi)\cdot\xi\geq \alpha |\xi|^p,\quad \alpha>0, \\
  \label{growth_comp}
  |\aop(x,s, \xi)|\leq c_1[|\xi|^{p-1}+|s|^{p-1}+a_0(x)], \, \\
  \notag c_1>0,\, a_0\in
  L^{p'}(\Omega),\, a_0\geq 0,
\end{gather}
 \begin{gather}
  \label{mon_comp} (\aop(x,s,\xi)-\aop(x,s,\eta))\cdot(\xi-\eta) \geq \left\{
    \begin{aligned}
    \null &  \beta \frac{|\xi-\eta|^2}{(|\xi|+|\eta|)^{2-p}} & \text{
      if $1\leq p\leq 2$,} \\
    \null & \beta |\xi-\eta|^2 (1+|\xi|+|\eta|)^{p-2} &\text{ if
      $p\geq 2$,}
    \end{aligned}\right.
\end{gather}
and moreover  $\aop(x,r, \xi)$ satisfies a Lipschitz condition   with respect to $r$
\begin{gather}\label{lips_a}
  |\aop(x,s, \xi)- \aop(x,r, \xi)|\leq c_2|s-r|(|\xi|^{p-1}+|s|^{p-1}+|r|^{p-1} + h(x)), \\
  \notag \, c_2>0,\, h\in
  L^{p'}(\Omega),\, h\geq 0,
\end{gather}
for almost every $x\in\Omega$, $s\in \R$ and for every $\xi\in\R^N$, then
Theorem~\ref{uniq_weak1} and Theorem~\ref{uniq_weak2} hold true. Indeed the
methods developped in \cite{BGM92} allow one to prove {\sl Step 1} in Theorem
\ref{uniq_weak1} namely
\begin{equation*} \label{step1_w_comp}
\lim_{k\to 0}\frac{1}{k^p}\int_\Omega  {|\nabla T_k(u-v)|^p}\, dx  =0.  
\end{equation*} 
and  {\sl Step 1} in  Theorem \ref{uniq_weak2}  namely 
\begin{equation*} \label{step1_w_comp}
\lim_{k\to 0}\frac{1}{k^2}\int_\Omega  {|\nabla T_k(u-v)|^2}\, dx  =0.  
\end{equation*} 
In both cases the  {\sl Step 2} and  {\sl Step 3} remain unchanged.
\end{remark}
\medskip

\begin{remark} %(mean null value) 
In \cite{BGM1} and in the present paper we have chosen to deal with solutions to
\eqref{pb} with null median value instead of null mean value. As explained in
Introduction this choice allows one to consider solution to \eqref{pb} for $f\in
L^1(\Omega)$ even if the solution $u$ does not belong to $ L^1(\Omega)$. When
$f\in L^{(p^{*})'}(\Omega)$ a simply examination of the proof of \cite{BGM1} leads
to the existence of solutions to \eqref{pb} such that $\disp \int_\Omega u\, dx=0$. Assuming
that \eqref{ell}--\eqref{lipphi} are in force similar arguments to the one developped in
the proof of Theorem~\ref{uniq_weak1} and Theorem~\ref{uniq_weak2} yield the
uniqueness of solution to \eqref {pb} having a null mean value. Let us explain
briefly the case $p=2$. {\sl Step 1} remains unchanged so that if $u$ and $v$
are two solutions of \eqref{pb} then we have
\begin{equation*} 
\lim_{k\to 0}\frac1{k^2} \int_{\Omega}|\nabla T_k(u-v)|^2\, dx =0  .
\end{equation*} 
Poincar\' e-Wirtinger inequality leads to
\begin{gather*}
\lim_{k\to 0}\int_{\Omega}\left |  \frac{T_k(u-v)}{k}  - \frac{1}{|\Omega|} \int_{\Omega}   \frac{T_k(u-v)}{k} dy  \right |^2\, dx=0.
\end{gather*} 
so that, up to subsequence, there exists $\gamma\in[-1,1]$ such that 
$$
\int_\Omega |\hbox{sign }(u-v)-\gamma|^2\, dx=0.
$$
As in {\sl Step 2}
$$
\gamma=0\,  \qquad \hbox{or}\qquad \gamma=-1\, \qquad \hbox{or}\qquad \gamma=1\,
$$
and 
\begin{equation*}  
   \begin{cases}
       u=v    &\quad \text{a.e. in } \Omega, \\
  u<v    &\quad \text{a.e. in } \Omega,\\
u>v    &\quad \text{a.e. in } \Omega.
 \end{cases}
\end{equation*}
We now show  that $u<v$  {a.e. in } $\Omega$ {or}
$u>v$ {a.e. in }$\Omega\,$ can not occur. 
The method is similar to {\sl Step 3} of the proof of Theorem \ref{uniq_weak1}:
\begin{equation*} 
w_{k,\delta}=\frac{T_k(u-v)}{k}\left ( \frac{T_\delta(u^+)}{\delta} -\frac{T_\delta(v^-)}{\delta}  \right)\,,
\end{equation*}
belongs to $H^1(\Omega) $ while 
$$\lim_{k\to0 }\lim_{\delta \to0 }\|\nabla w_{k,\delta}\|_{L^2(\Omega)}=0.$$
Poincar\' e-Wirtinger inequality yields
\[
  \lim_{k\to0 }\lim_{\delta \to0 } \int_\Omega \left |w_{k,\delta}  -
    \frac{1}{|\Omega|} \int_\Omega  w_{k,\delta} dy \right|^2\, dx=0.
\]
In the case $u>v \quad \text{a.e. in } \Omega$, the Lebesque dominated Theorem allows one to conclude that 
\[
\int_\Omega \left| \sign(u) -  \frac{1}{|\Omega|}\int_\Omega \sign(u)   dy \right |^2\, dx=0\,,
\]
and then  $u$ has a constant sign. Recalling that    $\int_\Omega u\, dx=\int_\Omega v\, dx=0$ gives a contradiction.
Therefore $u=v \quad \text{a.e. in } \Omega$.
\end{remark}

%%%%%%%%%%%%%% L1

\section{Uniqueness result for  renormalized solution}

In this section we prove  the uniqueness  of the renormalized solution to problem \eqref{pb}, when the following assumption on datum is made
\begin{equation}
  \label{dat_L1}
  f\in L^{1}(\Omega)\,. 
\end{equation}

As in Section \ref{section_weak}  we state two uniqueness theorems  depending on the values of $p$:

\begin{theorem} \label{uniq_ren_1} Let $1<p< 2$. Assume that
 \eqref{ell}--\eqref{lipphi} 
  with 
 \begin{equation}\label{tau2bis}
 \tau\le p-\frac32 +\left[\frac{p-1}{N}-\frac1t \right] \frac{N(p-1)}{N-p}
     \end{equation}
%   \begin{equation}\label{t_ren1} t\ge
%  \end{equation}
  and \eqref{comp}, \eqref{dat_L1} hold.
  If $u,v$ are two renormalized  solutions to 
  problem \eqref{pb} having $\med(u)= \med(v)=0$, then $u=v$ a.e. in $\Omega$.
\end{theorem}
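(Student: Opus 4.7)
The plan is to adapt the three-step strategy of the proof of Theorem~\ref{uniq_weak1} to the renormalized framework, the principal obstruction being that $u$ and $v$ are no longer globally in $W^{1,p}(\Omega)$, so $T_k(u-v)$ cannot be used directly as a test function. I would derive the analogue of \eqref{iniz} by testing the renormalized formulation \eqref{def4} for $u$, and separately for $v$, with $\varphi=T_k(T_n(u)-T_n(v))$ and the renormalization $h=h_m$ from \eqref{h_n}, and subtracting. Passing to the limit first as $m\to\infty$, during which the terms carrying $h_m'(u)$ and $h_m'(v)$ vanish thanks to \eqref{def3} and \eqref{ermk1}, and then as $n\to\infty$ using \eqref{growth} and \eqref{growthphi} together with \eqref{ermk4} and \eqref{ermk3}, should yield the identity
\begin{equation*}
\int_\Omega(\aop(x,\nabla u)-\aop(x,\nabla v))\cdot\nabla T_k(u-v)\,dx+\int_\Omega(\Phi(x,u)-\Phi(x,v))\cdot\nabla T_k(u-v)\,dx=0,
\end{equation*}
and, simultaneously, would produce $T_k(u-v)\in W^{1,p}(\Omega)$ with $\nabla T_k(u-v)=(\nabla u-\nabla v)\chi_{\{|u-v|<k\}}$.

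Once this identity is in hand, the monotonicity \eqref{mon} (for $1<p<2$) and the local Lipschitz bound \eqref{lipphi} reproduce exactly the estimate \eqref{iniz4b}, and the decisive issue becomes showing its right-hand side is finite. In the renormalized setting the variational bounds $u,v\in L^{p^{*}}(\Omega)$ and $\nabla u,\nabla v\in L^{p}(\Omega)$ used in Theorem~\ref{uniq_weak1} are no longer available; Proposition~\ref{prop2.4} provides instead only the weaker $|u|^{p-1}\in L^{q}(\Omega)$ for $q<N/(N-p)$ and $|\nabla u|^{p-1}\in L^{q}(\Omega)$ for $q<N/(N-1)$. The assumption \eqref{tau2bis} on $\tau$ is precisely the algebraic condition
\begin{equation*}
\frac{2}{t}+\frac{2\tau(N-p)}{N(p-1)}+\frac{(2-p)(N-1)}{N(p-1)}\le 1
\end{equation*}
needed for a triple H\"older inequality splitting $c^{2}\in L^{t/2}(\Omega)$, $(1+|u|+|v|)^{2\tau}$ in the Lebesgue space prescribed by \eqref{ermk4}, and $(|\nabla u|+|\nabla v|)^{2-p}$ in that prescribed by \eqref{ermk3}, to yield a finite right-hand side. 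Dominated convergence then forces the vanishing as $k\to 0$ of $k^{-2}\int_\Omega|\nabla T_k(u-v)|^{2}(|\nabla u|+|\nabla v|)^{p-2}\,dx$, and a final H\"older step identical to \eqref{step1bb} produces $\lim_{k\to 0}k^{-p}\int_\Omega|\nabla T_k(u-v)|^{p}\,dx=0$.

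With this estimate and $T_k(u-v)\in W^{1,p}(\Omega)$ established, Steps~2 and 3 of the proof of Theorem~\ref{uniq_weak1} carry over virtually unchanged: Poincar\'e--Wirtinger applied to $T_k(u-v)/k$, combined with the median argument, furnishes the trichotomy $u=v$, $u<v$ or $u>v$ a.e.\ in $\Omega$, and the strict cases are excluded by the auxiliary function $w_{k,\delta}$ from \eqref{natural}, which lies in $L^{\infty}\cap W^{1,p}(\Omega)$ since $T_\delta(u^{+}),T_\delta(v^{-})\in W^{1,p}(\Omega)$; the dominated convergence controlling $\nabla w_{k,\delta}$ remains valid because, for $\delta\le 1$, the bound $|\nabla u|\chi_{\{0<u<\delta\}}\le|\nabla T_{1}(u^{+})|\in L^{p}(\Omega)$ holds (and likewise for $v$). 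The main obstacle is Step~1: carrying out the two successive limit passages in the renormalized formulation so as to obtain simultaneously the starting identity and the membership $T_k(u-v)\in W^{1,p}(\Omega)$, and then performing the triple H\"older accounting under the reduced summability of Proposition~\ref{prop2.4}, which is precisely what pins down the sharp bound \eqref{tau2bis} on~$\tau$.
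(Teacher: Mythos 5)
Your Step 1 is essentially the paper's: the same choice of test functions built from $h_n$ and $T_k(u-v)$, the vanishing of the $h_n'$ terms via \eqref{def3} and \eqref{ermk1}, and the triple H\"older split of $c^2(1+|u|+|v|)^{2\tau}(|\nabla u|+|\nabla v|)^{2-p}$ against the summabilities \eqref{ermk4}, \eqref{ermk3} — your exponent bookkeeping correctly identifies \eqref{tau2bis}. The genuine gap is in the very last line of your Step 1 and in the claim that Steps 2 and 3 ``carry over virtually unchanged''. The H\"older step \eqref{step1bb} converts the weighted estimate
\begin{equation*}
\lim_{k\to 0}\frac{1}{k^2}\int_\Omega\frac{|\nabla T_k(u-v)|^2}{(|\nabla u|+|\nabla v|)^{2-p}}\,dx=0
\end{equation*}
into $\lim_{k\to 0}k^{-p}\int_\Omega|\nabla T_k(u-v)|^p\,dx=0$ only by paying the factor $\bigl(\int_\Omega(|\nabla u|+|\nabla v|)^p\,dx\bigr)^{1-p/2}$, and for renormalized solutions with $L^1$ data this factor is infinite: Proposition~\ref{prop2.4} only gives $|\nabla u|^{p-1}\in L^q$ for $q<N/(N-1)$, i.e.\ $\nabla u\in L^s$ for $s<N(p-1)/(N-1)<p$. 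Restricting to $\{0<|u-v|<k\}$ does not help, since that set does not bound $|u|$ or $|v|$. For the same reason your assertion that the limit procedure ``produces $T_k(u-v)\in W^{1,p}(\Omega)$'' is unjustified — the weighted estimate controls $\nabla T_k(u-v)$ only against the weight $(|\nabla u|+|\nabla v|)^{p-2}$, and without $\nabla u,\nabla v\in L^p$ one cannot conclude $\nabla T_k(u-v)\in L^p(\Omega)$.

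The paper repairs this by never discarding the cutoff: in Step 2 the Poincar\'e--Wirtinger inequality is applied to $h_n(u)\,T_k(u-v)/k$, so that the H\"older factor becomes $\int_\Omega(|\nabla T_{2n}(u)|^p+|\nabla T_{2n+k}(v)|^p)\,dx\le C_n<\infty$, and one passes to the limit first in $k$ and then in $n$ (using \eqref{def3} to kill the $h_n'(u)\nabla u$ contribution); likewise Step 3 uses $w_{n,k,\delta}=h_n(u)\frac{T_k(u-v)}{k}\bigl(\frac{T_\delta(u^+)}{\delta}-\frac{T_\delta(v^-)}{\delta}\bigr)$ with a triple iterated limit $\delta\to0$, $k\to0$, $n\to+\infty$, together with a separate verification that $\med(w_{n,k,\delta})=0$. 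So the architecture of your argument is right, but the extra truncation layer $h_n$ must persist through Steps 2 and 3; as written, your reduction to the weak-solution Steps 2 and 3 rests on an $L^p$ gradient bound that is simply not available in the $L^1$ setting.
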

\medskip

\begin{theorem} \label{uniq_ren_2} Let $p\ge 2$. Assume that
%  \eqref{ell}, 
  %\eqref{mon}, 
 \eqref{ell}--\eqref{lipphi} 
  with 
 \begin{equation}\label{taubis}
 \tau\le \frac{N(p-1)}{N-p}\left(\frac12 -\frac1t\right)
 \end{equation}
 \begin{equation}\label{t_ren2}  t\ge \max\left\{2, \frac{N }{p-1}\right\}
  \end{equation}
  and \eqref{comp}, \eqref{dat_L1} hold.
  If $u,v$ are two renormalized solutions to 
  problem \eqref{pb} having $\med(u)= \med(v)=0$, then $u=v$ a.e. in $\Omega$.
\end{theorem}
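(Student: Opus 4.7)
My plan is to follow the three-step scheme of Theorem~\ref{uniq_weak2}, the only new ingredient being the derivation of the identity \eqref{iniz} within the renormalized framework and an adjustment of the exponents to reflect the weaker integrability of $u,v$ provided by Proposition~\ref{prop2.4}.

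For Step~1, I would take the renormalized formulation \eqref{def4} both for $u$ and for $v$, with truncation $h=h_n$ from \eqref{h_n} and test function $\varphi = T_k(T_{2n}(u) - T_{2n}(v))$, which belongs to $L^\infty(\Omega)\cap W^{1,p}(\Omega)$ by \eqref{def2}. Subtracting the two equations, the terms in $h_n'(u)\aop(x,\D u)\D u\,\varphi$ and $h_n'(u)\Phi(x,u)\D u\,\varphi$ are bounded by $k n^{-1}\int_{\{n<|u|<2n\}}(\aop(x,\D u)\D u + |\Phi(x,u)||\D u|)\,dx$ and vanish as $n\to\infty$ by \eqref{def3} and \eqref{ermk1}; the symmetric terms in $h_n'(v)$ are handled identically, and the right-hand side $\int_\Omega f\,T_k(T_{2n}(u)-T_{2n}(v))(h_n(u)-h_n(v))\,dx$ is dominated by $2k|f|\in L^1(\Omega)$ and tends pointwise to $0$. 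In the limit $n\to\infty$ one recovers exactly \eqref{iniz}. Applying then \eqref{mon} with $p\geq 2$, \eqref{lipphi}, and H\"older with exponents $t$, $\frac{2t}{t-2}$ and $2$ (as in \eqref{iniz3}--\eqref{iniz4}), I obtain
\[
\frac{\beta^2}{k^{2}}\int_\Omega |\D T_k(u-v)|^2\,dx \leq \|c\|_{L^t(\{0<|u-v|<k\})}^{2}\,\|1+|u|+|v|\|_{L^{2t\tau/(t-2)}(\Omega)}^{2\tau}.
\]
The key difference with the weak case is that \eqref{ermk4} only provides $|u|,|v|\in L^r(\Omega)$ for every $r<\tfrac{N(p-1)}{N-p}$, instead of $L^{p^{*}}$; the hypothesis \eqref{taubis} is precisely what makes $\tfrac{2t\tau}{t-2}<\tfrac{N(p-1)}{N-p}$, so the second factor is finite, and dominated convergence yields $\lim_{k\to 0}k^{-2}\int_\Omega |\D T_k(u-v)|^2\,dx=0$.

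Steps~2 and~3 then follow along the lines of Theorem~\ref{uniq_weak2} and Theorem~\ref{uniq_weak1}, working in $W^{1,2}(\Omega)$ instead of $W^{1,p}(\Omega)$ (legitimate since the $L^2$ bound from Step~1 implies $T_k(u-v)/k\in W^{1,2}(\Omega)$, and \eqref{poincare} holds in $W^{1,2}$ as well). A subsequence of $\med(T_k(u-v)/k)$ converges to some $\gamma\in[-1,1]$, and the resulting $L^2$-identity $\int_\Omega |\sign(u-v)-\gamma|^2\,dx=0$ forces $\gamma\in\{-1,0,1\}$, hence the trichotomy $u=v$, $u>v$ or $u<v$ a.e.\ in $\Omega$. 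To exclude $u>v$ a.e.\ (the case $u<v$ being symmetric), the null median hypothesis forces $u$ and $v$ to have the same sign a.e., and the auxiliary function $w_{k,\delta}$ of \eqref{natural} belongs to $W^{1,2}(\Omega)$ (using \eqref{def2} for $T_\delta(u^+), T_\delta(v^-)$ and Step~1 for $T_k(u-v)$) and has null median; Poincar\'e--Wirtinger, the pointwise computation of $\D w_{k,\delta}$, and Lebesgue dominated convergence letting $\delta\to0$ then $k\to 0$ produce the contradictory identity $\chi_{\{u>0\}}=\chi_{\{v<0\}}$ a.e.

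I expect the main obstacle to be the $n\to\infty$ passage in Step~1: one must simultaneously dispose of the $h_n'$ terms, where \eqref{def3} and \eqref{ermk1} are essential, while keeping a test function that converges, in a sufficiently strong sense, to $T_k(u-v)$. The double truncation $T_k(T_{2n}(u)-T_{2n}(v))$ resolves this because $h_n$ is supported in $\{|u|\leq 2n\}$, so only truncated values effectively enter the equations, yet in the limit the outer $T_k$ captures the genuine difference $u-v$. Once \eqref{iniz} is in hand, the remainder is essentially the weak argument, carried out in $L^2$ with the exponents adjusted through Proposition~\ref{prop2.4}.
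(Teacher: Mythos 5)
Your three-step outline matches the paper's strategy, but the way you handle the renormalized framework has genuine gaps, concentrated in two places.

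First, in Step 1 you test the $u$-equation with $h=h_n(u)$ and the $v$-equation with $h=h_n(v)$, both against $\varphi=T_k(T_{2n}(u)-T_{2n}(v))$, and claim that after the $h_n'$ terms vanish you ``recover exactly \eqref{iniz}''. This fails for two reasons. (i) With your asymmetric choice the principal terms carry different cutoffs, $\int h_n(u)\aop(x,\nabla u)\cdot\nabla\varphi-\int h_n(v)\aop(x,\nabla v)\cdot\nabla\varphi$, so monotonicity \eqref{mon} does not apply directly; you are left with an uncontrolled cross term $\int (h_n(u)-h_n(v))\aop(x,\nabla v)\cdot\nabla\varphi$ (plus the mismatch regions where exactly one of $|u|,|v|$ exceeds $2n$), which you never address. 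The paper avoids this by taking $\varphi=h_n(v)T_k(u-v)$ in the $u$-equation and $\varphi=h_n(u)T_k(u-v)$ in the $v$-equation, so both principal terms carry the common factor $h_n(u)h_n(v)$ (identity \eqref{iniz_ren}); the price is the four remainder terms \eqref{step1_4_ren}--\eqref{step1_5_ren}, whose control via \eqref{def3}, \eqref{ermk1}, \eqref{cond_ren} is the real work of Step 1. (ii) Even granting the cross terms, \eqref{iniz} is not a legitimate limit identity for renormalized solutions: the individual integrals $\int\aop(x,\nabla u)\cdot\nabla T_k(u-v)\,dx$ are not known to be finite (only $\tfrac1n\int_{\{|u|<n\}}\aop\cdot\nabla u\to0$ is available), and there is no dominating function to pass $n\to\infty$ inside them. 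The correct order of operations, which the paper follows, is to apply monotonicity, \eqref{lipphi} and Young's inequality at \emph{fixed} $n$ to get \eqref{iniz2_ren2}, prove $|c|^2(1+|u|+|v|)^{2\tau}\in L^1(\Omega)$ from \eqref{taubis} and \eqref{ermk4} (this is where your exponent bookkeeping is correct), and only then send $n\to+\infty$ and $k\to0$ via Fatou and dominated convergence.

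Second, in Steps 2 and 3 you work with $T_k(u-v)/k$ and $w_{k,\delta}$ directly and assert they lie in $W^{1,2}(\Omega)$. For renormalized solutions this is not known: $T_k(u-v)$ is only the a.e.\ limit of $T_k(T_n(u)-T_n(v))$, and the gradients of these approximations are not obviously bounded in $L^2$ uniformly in $n$ (they pick up contributions from $\{n-k<|u|<n\}$). Step 1 gives control of the a.e.-defined quantity $(\nabla u-\nabla v)\chi_{\{|u-v|<k\}}$ but does not by itself identify it as a Sobolev gradient of $T_k(u-v)$. The paper circumvents this by inserting the cutoff throughout: the Poincar\'e--Wirtinger argument is run on $h_n(u)T_k(u-v)/k\in L^\infty\cap W^{1,p}(\Omega)$ and the contradiction argument on $w_{n,k,\delta}=h_n(u)\frac{T_k(u-v)}{k}\bigl(\frac{T_\delta(u^+)}{\delta}-\frac{T_\delta(v^-)}{\delta}\bigr)$, with an additional limit $n\to+\infty$ and the extra verifications \eqref{lim_d}--\eqref{lim_b} and $\med(w_{n,k,\delta})=0$. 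Without this regularization your Steps 2 and 3 are not justified in the $L^1$ setting.
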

\medskip

\begin{proof}[Proof of Theorem \ref{uniq_ren_1}]
%\noindent {\sl Proof of Theorem \ref{uniq_ren_1}} 

Let $u$ and $v$ be two renormalized solutions to \eqref{pb}. 
Let $h_{n}$ defined by $\eqref{h_n}$.
Since for any $k>0$, $\disp h_{n}(u) {T_{k}(u-v)}=
h_{n}(v){T_{k}(T_{2n}(u)-T_{2n+k}(u))} \in L ^\infty(\Omega)\cap W^{1,p
}(\Omega)$, we can use $h=h_n(u)$ and $\varphi =h_{n}(v){T_{k}(u-v)}$ in
$\eqref{def4}$ written in $u$, and we can use $h=h_n(v)$ and $\varphi
=h_{n}(u){T_{k}(u-v)} $ in $\eqref{def4}$ written in $v$. By substracting the
two equations, we get
\begin{gather}\label{iniz_ren}
 \int_\Omega h_n(u)h_n(v)(\aop(x, \nabla u)- \aop(x, \nabla v))\cdot \nabla T_k(u-v)\, dx   \\ 
\notag    +  \int_\Omega h_n(u)h_n(v)(\Phi(x,  u)- \Phi(x,  v))\cdot \nabla T_k(u-v)\, dx\\
\notag +\int_\Omega h'_n(u)h_n(v)T_k(u-v)(a(x, \nabla u)+ \Phi(x,  u)- a(x, \nabla v)-\Phi(x,  v))\cdot \nabla u\, dx   \\
\notag    +  \int_\Omega h_n(u)h'_n(v)T_k(u-v)(a(x, \nabla u)+ \Phi(x,  u)- a(x, \nabla v)-\Phi(x,  v))\cdot \nabla v\, dx=0\,.
\end{gather}
 We proceed by dividing the proof into 3 steps.
\medskip

\noindent {\sl Step 1.} By passing to the limit in \eqref{iniz_ren} first as
$n\to +\infty$, then as $k\to 0$ this step is to devoted to prove that
\begin{equation} \label{step1fin_ren}
\lim_{k\to 0}\frac{1}{k^2}\int_\Omega  \frac{|\nabla T_k(u-v)|^2}{(|\nabla u|+|\nabla v|)^{2-p }}\, dx  =0.  
\end{equation} 

We first study the behaviour of the last two integrals in \eqref{iniz_ren}  as
$n$ goes to $+\infty$ by showing
\begin{equation}\label{step1_4_ren}
\lim_{n\to+\infty} \int_\Omega h'_n(u)h_n(v)T_k(u-v)(\aop(x, \nabla u)+ \Phi(x,  u)- \aop(x, \nabla v)-\Phi(x,  v))\cdot \nabla u\, dx =0
\end{equation}
and by symmetry with respect to $u$ and $v$
\begin{equation}\label{step1_5_ren}
\lim_{n\to+\infty} \int_\Omega h'_n(v)h_n(u)T_k(u-v)(\aop(x, \nabla u)+ \Phi(x,  u)- \aop(x, \nabla v)-\Phi(x,  v))\cdot \nabla v\, dx =0.
\end{equation}

By \eqref{def3} of Definition \ref{defrenorm} and \eqref{ermk1} of Proposition 2.4, we get
\begin{equation}\label{step1_1_ren}
\lim_{n\to+\infty}\int_\Omega h'_n(u)h_n(v)T_k(u-v)\aop(x, \nabla u)\cdot\nabla u\, dx=0,
\end{equation}
\begin{equation}\label{step1_1bis_ren}
\lim_{n\to+\infty}\int_\Omega h'_n(u)h_n(v)T_k(u-v)\Phi(x,  u)\cdot \nabla u\, dx=0.
\end{equation}

By  assumption \eqref{growth} and H\"older inequality we have
\begin{gather*}%\label{step1_ren}
\left | \int_\Omega h'_n(u)h_n(v)T_k(u-v)\aop(x, \nabla v)\cdot\nabla u\, dx\right |\\
\le \frac{ck}{n}\left(\int_{\{|v|\le2n\}}(|a_0(x)| +|\nabla v|^{p-1})^{\frac{p}{p-1}}\, dx\right)^{\frac{p-1}{p}} 
\left(\int_{\{|u|\le2n\}}|\nabla u|^{p} \, dx\right)^{\frac{ 1}{p}}. \notag
\end{gather*}
Using  \eqref{ell}  and \eqref{def3} we deduce that
\begin{equation}\label{cond_ren}
\lim_{n\to+\infty}  \frac1n \int_{\{|u|\le2n\}}|\nabla u|^{p} \, dx =0.
\end{equation}
Therefore recalling that $a_0\in L^{p'}(\Omega)$ we conclude that
\begin{equation}\label{step1_2_ren}
\lim_{n\to+\infty}\left| \int_\Omega h'_n(u)h_n(v)T_k(u-v)\aop(x, \nabla v)\cdot\nabla u\, dx\right|=0.
\end{equation}
To prove that \eqref{step1_4_ren} holds it remains to control  $\int_\Omega
h'_n(u)h_n(v)T_k(u-v)\Phi (x,  v)\cdot\nabla u\, dx$. 
By assumption \eqref{growthphi} and H\"older inequality we have
\begin{gather*}%\label{step1_ren}
\left | \int_\Omega h'_n(u)h_n(v)T_k(u-v)\Phi (x,  v)\cdot\nabla u\, dx\right |\\
\le k \left(\frac1n\int_{\{|v|\le2n\}}|c(x)|^{\frac{p}{p-1}}(1+| v|)^{{p}} \, dx\right)^{\frac{p-1}{p}} 
\left(\frac1n\int_{\{|u|\le2n\}}|\nabla u|^{p} \, dx\right)^{\frac{ 1}{p}}.
\end{gather*}
Since $c\in L^{t}(\Omega)$ with $t\geq N/(p-1)$ (see Assumption \eqref{2n0bis})
we get
\[
  \int_{\{|v|\le2n\}}|c(x)|^{\frac{p}{p-1}}(1+| v|)^{{p}} \leq
  \Big(\int_{\Omega} |c|^{N/(p-1)}\, dx\Big)^{p/N}
 \| 1+ T_{2n}(v)\|_{L^{p^{*}}(\Omega)}^{p}
\]
and Poincar\'e-Wirtinger inequality leads to
\[
\int_{\{|v|\le2n\}}|c(x)|^{\frac{p}{p-1}}(1+| v|)^{{p}} \leq C
\Big(\int_{\Omega} |c|^{N/(p-1)}\, dx\Big)^{p/N}
 \big( 1+ \int_{\Omega}|\nabla T_{2n}(v)|^{p} \,dx\big),
\]
where $C>0$ is independent of $n$ and $k$. It follows that
\begin{multline*}%\label{step1_ren}
\left | \int_\Omega h'_n(u)h_n(v)T_k(u-v)\Phi (x,  v)\cdot\nabla u\, dx\right |
\le Ck \|c\|_{L^{N/(p-1)}(\Omega)}\\
{}\times \Big( \frac{1}{n}+ \frac{1}{n}\int_{\{|v|\le2n\}}|\nabla v|^{p} \,dx\Big)^{\frac{p-1}{p}}
\Big(\frac1n\int_{\{|u|\le2n\}}|\nabla u|^{p} \, dx\Big)^\frac{1}{p}.
\end{multline*}
Therefore  \eqref{cond_ren} leads to
\begin{equation}\label{step1_3_ren}
\lim_{n\to+\infty}\left| \int_\Omega h'_n(u)h_n(v)T_k(u-v)\Phi(x,  v)\cdot\nabla u\, dx\right|=0,
\end{equation}
and then \eqref{step1_4_ren} holds. We observe that \eqref{step1_5_ren} is obtained by
analogous argument.

Then by $\eqref{iniz_ren}$, \eqref{step1_4_ren}, \eqref{step1_5_ren}, using the
assumptions on the strong monotonicity on the operator \eqref{mon}, the local
Lipschitz condition on $\Phi$ \eqref{lipphi} with $\tau$ which satisfies
\eqref{tau2bis} and Young inequality we get
 \begin{gather}\label{iniz2_ren}
\beta \int_\Omega h_n(u)h_n(v)\frac{|\nabla T_k(u-v)|^2}{(|\nabla u|+|\nabla v|)^{2-p} }\, dx \le 2 \omega_k(n) \\
\notag    + \frac{2k^2}{\beta}    \int_{\{0<|u-v|<k\}}  h_n(u)h_n(v) |c(x)|^2 (1+|u|+|v|)^{2\tau} (|\nabla u|+|\nabla v|)^{2-p}\, dx  , 
\end{gather}
where $\disp\lim_{n}\omega_k(n)=0$. 

We now prove that
\begin{equation}\label{step1_c_ren}|c(x)|^2 (1+|u|+|v|)^{2\tau} (|\nabla u|+|\nabla v|)^{2-p}\in L^1(\Omega),
\end{equation}
so that we can pass to the limit in \eqref{iniz2_ren} as $n\to +\infty$. 
By H\"older inequality  we get 
\begin{gather}\label{step1_int_c_ren}
  \int_{\Omega}   h_n(u)h_n(v) |c(x)|^2 (1+|u|+|v|)^{2\tau} (|\nabla u|+|\nabla v|)^{2-p}\, dx   \\
  \notag  \le \left( \int_{\{|u|<2n,\,|v|<2n\}} h_n(u)h_n(v) |c(x)|^{t}\, dx \right)^{\frac{2 }{t}}\\
  \notag \times 
  \left( \int_{\{|u|<2n,\,|v|<2n\}} h_n(u)h_n(v)  (1+|u|+|v|)^{\nu}\, dx \right)^{\frac{2\tau}{\nu}}\\
  \notag  \times\left( \int_{\{|u|<2n,\,|v|<2n\}} h_n(u)h_n(v) {(|\nabla u|+|\nabla v|)^{\mu}}\, dx \right)^{\frac{ 2-p }{\mu }}
\end{gather}
with
\begin{equation}
  \frac1t+\frac{2\tau}{\nu}+\frac {2-p}{\mu}\le 1, \,\, \nu <\frac {N(p-1)}{N-p}, \,\, \mu <\frac {N(p-1)}{N-1},
\end{equation}
This choice   is possible since  \eqref{tau2bis} holds and in view of
\eqref{ermk4} and \eqref{ermk3} of Proposition \ref{prop2.4} we have
\begin{gather*}
  (1+|u|+|v|)^{\nu} \in L^{1}(\Omega),\quad (|\nabla u|+|\nabla v|)^{\mu} \in L^{1}(\Omega).
\end{gather*}
%\begin{gather}\label{step1_4_ren}
% \int_{\Omega}   h_n(u)h_n(v) |c(x)|^2 (1+|u|+|v|)^{2\tau} (|\nabla u|+|\nabla v|)^{2-p}\, dx   \\
%\notag  \le \left( \int_{|u|<2n,\,|v|<2n} h_n(u)h_n(v) |c(x)|^{\frac{N}{p-1}}\, dx \right)^{\frac{2(p-1)}{N}}\\
%\notag \times 
%\left( \int_{|u|<2n,\,|v|<2n} h_n(u)h_n(v)  (1+|u|+|v|)^{\nu}\, dx \right)^{\frac{2(p-1)(N-p)}{Np}}\\
%\notag  \times\left( \int_{|u|<2n,\,|v|<2n} h_n(u)h_n(v) \frac{(|\nabla u|+|\nabla v|)^{p}}{ (1+|u|+|v|)^{1+m}}\, dx \right)^{\frac{ 2-p }{p }}
%\end{gather}
%where $\nu=\frac{N(2\tau p+(1+m)(2-p))}{2(p-1)(N-p)}$. Since $\tau <p-\frac32$ it is possible to choose $m>0$ such that 
%\[ \nu=\frac{N(2\tau p+(1+m)(2-p))}{2(p-1)(N-p)}< 
%\frac{N(2p^2-4p+2)}{2(p-1)(N-p)}  =
%\frac{N(p-1)}{N-p}\]
%Then by Proposition 2.4, $\disp (1+|u|+|v|)^{\nu}\in L^1(\Omega)$. 
% Moreover Proposition 2.4 gives 
%$\disp \frac{(|\nabla u|+|\nabla v|)^{p}}{ (1+|u|+|v|)^{1+m}}\in L^1(\Omega)$. Since $c\in L^{\frac{N}{p-1}}(\Omega)$
Passing to the limit as $n$ goes to $+\infty$, assumption \eqref{2n0bis} on $c$
and Fatou Lemma yield that
\[
|c(x)|^2 (1+|u|+|v|)^{2\tau} (|\nabla u|+|\nabla v|)^{2-p}\in L^1(\Omega).
\]
Then we can pass to the limit as $n\to +\infty$ in \eqref{iniz2_ren}, and
dividing  \eqref{iniz2_ren} by $k^{2}$ and using Fatou Lemma we get 
\begin{gather}\label{iniz3_ren}
\frac{1}{k^{2}} \int_\Omega \frac{|\nabla T_k(u-v)|^2}{(|\nabla u|+|\nabla v|)^{2-p} }\, dx\\
 \notag  
\le \frac{1}{\beta^2} \int_{\{0<|u-v|<k\}} |c(x)|^2 (1+|u|+|v|)^{2\tau} (|\nabla u|+|\nabla v|)^{2-p} \, dx\,.
\end{gather}
Recalling that $\chi_{\{0<|u-v|<k\}}$ converges to 0 a.e. as $k$ goes to zero,
 Lebesgue
dominated Theorem and \eqref{step1_c_ren} allow one to conclude that
\eqref{step1fin_ren} holds.
\medskip

\noindent {\sl Step 2.} We prove that either
\begin{equation*}  
   \begin{cases}
       u=v    &\quad \text{a.e. in } \Omega, \\
  u<v    &\quad \text{a.e. in } \Omega,\\
u>v    &\quad \text{a.e. in } \Omega.
 \end{cases}
\end{equation*}
%$$u=v\,, \quad \hbox{a.e. in }\Omega\,\quad or  \quad u<v\,, \quad \hbox{a.e. in }\Omega\,\quad or\quad u>v\,, \quad \hbox{a.e. in }\Omega\,
%$$
Observe that  for $k<n$ 
\[
  h_n(u)\frac{T_k(u-v)}{k}= h_n(u)\frac{T_k(T_{3n}(u)-T_{3n}(v))}{k} \in
  L^{\infty}(\Omega)\cap W^{1,p}(\Omega).
\]
Then by Poincar\' e-Wirtinger inequality, we get
\begin{gather}\label{PWTkh_n}
\int_{\Omega}\left | h_n(u)  \frac{T_k(u-v)}{k}  -\med\left (h_{n}(u)
\frac{T_{k}(u-v)}{k} \right) \right |^p\, dx     \\
\notag \\
\notag   \le C\int_{\Omega} \left|\nabla \left (h_{n}(u)
\frac{T_{k}(u-v)}{k} \right)\right |^{p}dx .
\end{gather} 
Let us evaluate the integral at the right-hand side. We show that it goes to zero first as $k\to 0$ and then as $n\to +\infty$.

Since 
\begin{gather}\label{gradTkh_n}
\disp \nabla \left (h_{n}(u)
\frac{T_{k}(u-v)}{k} \right)\\
\notag
= h'_n(u)\nabla u \frac{T_k(u-v)}{k}+h_n(u)  \frac{\nabla T_k(u-v)}{k} \quad \text{ a.e. in }\Omega,
\end{gather}
 $\disp \left| \frac{T_{k}(u-v)}{k} \right |\le 1$ and $h'_n(u)\le\frac1n$,
we get
\begin{gather}\label{gradTkh_n_Lp}
\int_{\Omega} \left|\nabla \left (h_{n}(u)
\frac{T_{k}(u-v)}{k} \right)\right |^{p}dx    \\ 
\notag\\
\notag    \le  
\frac{1}{n^p}\int_{\Omega}\left |\nabla T_{2n}(u)\right |^p dx + 
\frac{1}{k^p} \int_{\Omega}h_n(u)^p \left |    {\nabla T_k(u-v)} \right |^p\, dx     \,.
\end{gather} 
 Let us evaluate the second integral in the right hand side of \eqref{gradTkh_n_Lp}. By H\"older inequality we obtain 
 \begin{gather*}\label{?}
 \frac{1}{k^p} \int_{\Omega}h_n(u)^p
\left |    {\nabla T_k(u-v)} 
\right |^p\, dx   \\ 
\notag\\
\notag    \le  
\left (\frac1{k^2} \int_{\Omega} \frac{\left | \nabla T_k(u-v)\right |^2}{{(|\nabla u|+|\nabla v|)^{2-p }}  }\, dx  \right)^{\frac p2} 
\left (\int_{\Omega}h_n(u)^{\frac{2p}{2-p}} \chi_{\{|u-v|<k\}} 
(|\nabla u|+|\nabla v|)^p\, dx  \right)^{\frac{2-p}{2}} 
 \\ 
\notag\\
\notag    \le C\left (\frac1{k^2} \int_{\Omega} \frac{\left | \nabla T_k(u-v)\right |^2}{{(|\nabla u|+|\nabla v|)^{2-p }}  }\, dx  \right)^{\frac p2} 
\left (\int_{\Omega}(|\nabla T_{2n}(u)|^p+|\nabla T_{2n+k}(v)|^p) \, dx  \right)^{\frac{2-p}{2}} ,
\end{gather*} 
that is, if $n$ is fixed, 
\begin{gather*} 
 \frac{1}{k^p} \int_{\Omega}h_n(u)^p
\left |    {\nabla T_k(u-v)} 
\right |^p\, dx   \le  C_n  \left (\frac1{k^2} \int_{\Omega} \frac{\left | \nabla T_k(u-v)\right |^2}{{(|\nabla u|+|\nabla v|)^{2-p }}  }\, dx  \right)^{\frac p2}, 
\end{gather*} 
where  $C_{n}>0$ is a constant depending on $n$ (and independent of $k$). Therefore, by Step 1,  we deduce that
\begin{gather}\label{dastep1}  
 \lim_{k\to 0}\frac{1}{k^p} \int_{\Omega}h_n(u)^p
\left |    {\nabla T_k(u-v)} 
\right |^p\, dx =0\,.
\end{gather} 
Since $\disp \left| \frac{T_{k}(u-v)}{k} \right | $ converges to $\sign (u-v)$
in $L^\infty(\Omega)$ weak-$*$, we deduce from $\eqref{gradTkh_n}$ and
$\eqref{dastep1}$ that for fixed $n$, as $k\to 0$
\[
\disp \nabla \left (h_{n}(u)
\frac{T_{k}(u-v)}{k} \right)\longrightarrow h'_n(u)\nabla u \, \sign (u-v)\,, \,
\, \text{in} \,\, (L^p(\Omega))^N.
\]
We now pass to the limit as $n\to +\infty$. By the definition of $h_n$ we have
$$ 
\int_{\Omega}\left |h'_n(u)\nabla u\right |^p dx \le 
\frac{1}{n^p}\int_{\Omega}\left |\nabla T_{2n}(u)\right |^p
dx
$$
so that \eqref{def3} and \eqref{dastep1} lead to
%$$\lim_{n\to +\infty}\frac{1}{n^p}\int_{\Omega}\left |\nabla T_{2n}(u)\right |^p=0,$$ we obtain that 
\begin{equation}\notag
\lim_{n\to +\infty}\lim_{k\to 0}  \int_{\Omega}\left |    \nabla \left (h_{n}(u)
\frac{T_{k}(u-v)}{k} \right) \right |^p\, dx =0.
\end{equation}
Therefore  using \eqref{PWTkh_n}, we deduce that
\begin{gather}\label{lim_ren}
\lim_{n\to +\infty}\,\lim_{k\to 0}  \,\int_{\Omega}\left | h_n(u)  \frac{T_k(u-v)}{k}  -\med\left (h_{n}(u)
\frac{T_{k}(u-v)}{k} \right) \right |^p\, dx=0.
\end{gather}

Since $\left| h_n(u) \frac{T_k(u-v)}{k}  \right|\le 1$, we obtain
$$
\left| \med \left(h_n(u) \frac{T_k(u-v)}{k}\right )\right | \le 1\,, \,\,  k>0.
$$
It follows that, up to a subsequence
$$
\lim_{n\to +\infty} \lim_{k\to 0}  \med \left(h_n(u)\frac{T_k(u-v)}{k}\right )=\gamma\,.
$$
for a suitable constant $\gamma \in \R$, $|\gamma | \le 1$. 

On the other hand
since $u$ is finite a.e. we have
$$ \lim_{k\to 0}h_n(u) \frac{T_k(u-v)}{k} =  h_n(u)\, \sign(u-v)\,,\quad
\text{a.e. and}\,L^{\infty}(\Omega) \text{ weak-$*$}, 
$$
$$\lim_{n\to +\infty} h_n(u) \, \sign(u-v)=   \sign(u-v)\,,\quad\text{a.e.
  and}\,L^{\infty}(\Omega) \text{ weak-$*$.} 
$$
Then, up to subsequence, by {\eqref{lim_ren}} we get
$$
\int_\Omega |\hbox{sign }(u-v)-\gamma|^p\, dx=0.
$$
This implies
$$
\gamma=0\,  \qquad \hbox{or}\qquad \gamma=-1\, \qquad \hbox{or}\qquad \gamma=1,
$$
and means that  either
$$
u=v\,, \, \hbox{a.e. in }\Omega\,\, \hbox{or }  \, u<v\,, \, \hbox{a.e. in }\Omega\,\, \hbox{or } 
\, u>v\,, \, \hbox{a.e. in }\Omega\,. 
$$
\medskip

\noindent {\sl Step 3.} We prove that $u<v\,, \, \hbox{a.e. in }\Omega\,\, \hbox{or } 
\, u>v\,, \, \hbox{a.e. in }\Omega\,$ can not occur. 

\noindent We assume that 
\begin{equation}\label{uv_ren}
u>v\,, \quad \hbox{a.e. in }\Omega
\end{equation}
and  we prove that this yields a contradiction.

%The same arguments prove that $u<v$ a.e. in $\Omega$ can not be verified.
%\noindent Since $\med(v)=0$,  $\disp \meas \{ x\in \Omega\, :\, v(x)< 0 \}\le \frac{|\Omega|}{2}$, then
%\begin{equation}\label{meas_ren} \meas \{ x\in \Omega\, :\, v(x)\ge 0 \}\ge \frac{|\Omega|}{2}\,.\end{equation} 
%On the other hand, we have
%$$\{ x\in \Omega\, :\, u(x)> 0 \}=\{ x\in \Omega\, :\, u(x)> 0\,, v(x)\ge 0 \}\cup \{ x\in \Omega\, :\, u(x)> 0\,, v(x)< 0 \}.$$
%Since we assume  \eqref{uv_ren}, then
%$$\{ x\in \Omega\, :\, u(x)> 0\,, v(x)\ge 0 \}=\{ x\in \Omega\, :\,  u>0,\, v(x)\ge 0 \}$$
%Therefore we get 
%\begin{equation}\label{u_pos}\{ x\in \Omega\, :\, u(x)> 0 \}=\{ x\in \Omega\, :\,  v(x)\ge 0 \}\cup \{ x\in \Omega\, :\, u>0,\, v(x)< 0 \}.\end{equation}
%Since  $\med(u)=0$,
%$$\meas \{ x\in \Omega\, :\, u(x)> 0 \}\le \frac{|\Omega|}{2}\,,$$
% since \eqref{meas_ren} holds true, by \eqref{u_pos} we conclude that
%$$
%\meas\{ x\in \Omega\, :\, u(x)> 0 \,, v(x)<0\}=0\,.$$
%This means that 

The arguments used in Step 3 of Theorem \ref{uniq_weak1} allow us to
prove that
\[
  \text {``$u$ and $v$ have the same sign".}
\]

Let us consider the test function
\begin{equation}\label{natural2}
w_{n,k,\delta}=h_n(u)\frac{T_k(u-v)}{k}\left ( \frac{T_\delta(u^+)}{\delta} -\frac{T_\delta(v^-)}{\delta}  \right)\,,
\end{equation}
for fixed $n>0$, $k>0$, $\delta >0$, where
$$
u^+=\max \{ 0\,, u \}\, , \qquad v^-=\max \{ 0\,, - v \}.
$$
Observe that, since for $k<n$ $h_n(u)\frac{T_k(u-v)}{k}\in
  L^{\infty}(\Omega)\cap W^{1,p}(\Omega)$
we have
\[
  w_{n,k,\delta}\in L^{\infty}(\Omega)\cap W^{1,p}(\Omega).
\]

\noindent We now evaluate the gradient of $w_{n, k,\delta}$:
\begin{gather}\label{gradw_ren}
\nabla w_{n,k,\delta}=\nabla \left (h_n(u) \frac{ T_k(u-v)}{k}\right )  \left ( \frac{T_\delta(u^+)}{\delta} -\frac{T_\delta(v^-)}{\delta}  \right)\\
\notag +
h_n (u) \frac{T_k(u-v)}{k}\left (\frac{\nabla u}{\delta} 
\chi_{\{\, 0<u<\delta  \} } - \frac{\nabla v}{\delta} \chi_{\{\, -\delta<v<0  \} }  \right )  \,\, \hbox{a.e. in }\Omega.
\end{gather}
and we study the limit as $\delta\to 0$, $k\to 0$ and then $n\to +\infty $.
We firstly show that $\med(w_{n,k,\delta})=0$.
Let $\eta $ such that $0<\eta<\frac12$.
\begin{gather*}
\{ x\in \Omega\, :\, w_{n,k,\delta}(x)> \eta \}=\{ x\in \Omega\, :\, w_{n,k,\delta}(x)> \eta,\, 0<u<2n \}\\\
\notag \subset \left \{x\in \Omega\, :\,\frac{T_\delta(u^+)}{\delta}>\eta\right \}=\{x\in \Omega\, :\, u^+ >\eta\delta \}.
\end{gather*}
Since $\med (u)=0$, we have
$$
\meas \{ x\in \Omega\, :\, u(x)> \eta\delta \}< \frac{\meas(\Omega)}{2}\,.
$$
It follows that $\forall  \eta<\frac12,$
$$
\meas \{ x\in \Omega\, :\, w_{n,k,\delta}(x)> \eta \}< \frac{\meas(\Omega)}{2}\,,\,\,
$$
which means $\med (w_{n,k,\delta})\le 0$. 

On the other hand since
$$\forall \eta >0\,,\,\, \{ x\in \Omega\, :\, w_{n,k,\delta}(x)> -\eta \}  \supset \{ x\in \Omega\, :\, u\ge 0 \} $$ 
and 
$$
\meas \{ x\in \Omega\, :\, u\ge0 \}\ge \frac{\meas(\Omega)}{2}\,, 
$$
 we deduce that
$$
\meas \{ x\in \Omega\, :\, w_{n,k,\delta}(x)> -\eta \}\ge \frac{\meas(\Omega)}{2}\,,\,\,\forall  \eta>0,
$$
which means $\med (w_{n,k,\delta})\ge 0$.
We can conclude that 
$$\med (w_{n,k,\delta})= 0.$$

 \noindent Then from  Poincar\' e-Wirtinger inequality, by using \eqref{gradTkh_n_Lp} and \eqref{gradw_ren}, we obtain 
\begin{gather}\label{PWw_ren}
\int_\Omega|w_{n,k,\delta}|^p\, dx\le C \int_{\Omega} |\nabla w_{n,k,\delta}|^p\, dx \\
\notag \le C \Big \{ \frac1{n^p}\int_\Omega \left| \nabla T_{2n}(u)\right |^pdx +  \frac1{k^p}\int_\Omega h_n^p(u) \left| \nabla T_{k}(u-v)\right |^pdx \\
\notag 
+  \frac1{\delta ^p k^p}\int_\Omega h_n^p(u) \left|  T_{k}(u-v)\right |^p  
\left (\left| \nabla T_{\delta}(u^+)\right |+ \left| \nabla T_{\delta}(v^-)\right | \right )^p dx
\Big \}\,.
\end{gather}
We now prove that
\begin{equation}\label{lim_d}\lim_{n\to +\infty}\frac{1}{n^{p}}\int_\Omega   \left| \nabla T_{2n}(u)\right |^p dx=0,
\end{equation} 
\begin{equation}\label{lim_c}\lim_{k\to 0}\frac{1}{k^p}\int_\Omega h_n^p(u) \left| \nabla T_{k}(u-v)\right |^p dx=0,
\end{equation} 
\begin{equation}\label{lim_a}
\lim_{\delta\to 0}\frac1{\delta ^p k^p}\int_\Omega h_n^p(u) \left|  T_{k}(u-v)\right |^p  
 \left| \nabla T_{\delta}(u^+)\right |  ^p dx=0,
 \end{equation}
\begin{equation}\label{lim_b}
\lim_{\delta\to 0}\frac1{\delta ^p k^p}\int_\Omega h_n^p(u) \left|  T_{k}(u-v)\right |^p  
\left| \nabla T_{\delta}(v^-)\right |  ^p dx=0.
\end{equation}
Clearly  \eqref{lim_d} is a consequence of \eqref{def3} in
Definition~\ref{defrenorm}. As far as \eqref{lim_c} is concerned, by H\"older
inequality we have
\begin{gather*}
\frac{1}{k^p}\int_\Omega h_n^p(u) \left| \nabla T_{k}(u-v)\right |^p dx \le 
\left(\frac{1}{k^2}\int_{\{0<|u-v|<k\}} \frac{|\nabla u -\nabla v|^2}{ \left (\left| \nabla u\right |+ \left| \nabla v\right | \right )^{2-p}}dx\right)^{\frac p2}\\
\times \left( \int_{\{0<|u-v|<k\}} h_n^{\frac{2p}{2-p}}(u){ \left (\left| \nabla u\right |+ \left| \nabla v\right | \right )^{p}}dx\right)^{\frac12}
\end{gather*}
and in view of the definition of $h_{n}$, if $n$ is fixed, for any $k<1$ we have
\[
  \begin{split}
    \int_{\{0<|u-v|<k\}} & h_n^{\frac{2p}{2-p}}(u){ \left (\left|
          \nabla u\right |+ \left| \nabla v\right | \right )^{p}}dx
    \\ & {} \le
    % \int_{\{u<2n,v<2n+1\}} { \left (\left| \nabla u\right |+ \left|
    %     \nabla v\right | \right )^{p}}dx
    \int_{\Omega} { \left (\left| \nabla T_{2n}(u)\right |+ \left| \nabla
          T_{2n+1}(v)\right | \right )^{p}}dx.
    \\ & {} \le C_{n},
  \end{split}
\]
where $C_{n}>0$ is a constant depending on $n$ (and independent of $k$).
From \eqref{step1fin_ren} it follows that for any fixed $n>0$ \eqref{lim_c}
holds.

We now turn to \eqref{lim_a} and \eqref{lim_b}.
Observe that 
$$
\frac1{k^p\delta ^p}{\left|  T_{k}(u-v)\right |^p}   \left| \nabla T_{\delta}(u^+)\right |^p 
=
\frac{1}{k^p\delta^p}  \left| T_{k}(u-v)\right |^p
\left| \nabla  u \right |^p  \chi_{\{\, 0<u<\delta  \} }
$$
a.e. in $\Omega$. Since $u>v$ a.e. in $\Omega$ and $\meas \{ x\in \Omega\, :\, u>0,\, v<0 \}=0\,, $ we get 
$$\left| T_{k}(u-v)\right |^p
  \chi_{\{ 0<u<\delta  \} }\le \delta^p,$$
and then 
$$
\frac1{k^p\delta ^p}{\left|  T_{k}(u-v)\right |^p}   \left| \nabla T_{\delta}(u^+)\right |^p 
\le
\frac{1}{k^p}  
\left| \nabla  u\right |^p  \chi_{\{ 0<u<\delta  \} }.
$$
The Lebesgue dominated Theorem gives for fixed $k>0$, 
$$
\frac{1}{k  }|\nabla u|  \chi_{\{  0<u<\delta   \} }\rightarrow 0
\qquad \hbox{strongly in }L^p(\Omega), \text{ as $\delta\rightarrow0$.}
$$
We deduce \eqref{lim_a}.
%\begin{equation}\label{lim_a}
%\lim_{\delta\to 0}\frac1{\delta ^p k^p}\int_\Omega h_n^p(u) \left|  T_{k}(u-v)\right |^p   \left| \nabla T_{\delta}(u^+)\right |  ^p dx=0. \end{equation}
In analogous way we get \eqref{lim_b}.
%\begin{equation}\label{lim_b}
%\lim_{\delta\to 0}\frac1{\delta ^p k^p}\int_\Omega h_n^p(u) \left|  T_{k}(u-v)\right |^p   \left| \nabla T_{\delta}(u^-)\right |  ^p dx=0.\end{equation} 
%$$\lim_{k\to 0} \frac{1}{k^2}\int_{\{0<|u-v|<k\}} \frac{|\nabla u -\nabla v|^2}{ \left (\left| \nabla u\right |+ \left| \nabla v\right | \right )^{2-p}}dx=0,$$
%\begin{equation}\label{lim_c}\lim_{k\to 0}\frac{1}{k^p}\int_\Omega h_n^p(u) \left| \nabla T_{k}(u-v)\right |^p dx=0.\end{equation} 

%\begin{equation}\label{lim_d}\lim_{n\to \infty}\frac{1}{n}\int_\Omega   \left| \nabla T_{2n}(u)\right |^p dx=0.
%\end{equation} 

By collecting \eqref{lim_a}, \eqref{lim_b}, \eqref{lim_c}, \eqref{lim_d} and \eqref{PWw_ren} we can conclude that 
$$
\lim_{n\to +\infty}\lim_{k\to 0}\lim_{\delta \to 0} \int_\Omega|w_{n,k,\delta}|^p\, dx=0\,,
$$
which gives, via Lebesgue dominated Theorem,  
$$ \left| \hbox{sign }(u-v) \left ( \chi_{\{  u>0  \}} - \chi_{\{  v<0  \}}\right)\right | =0\,.
$$
This implies that $\chi_{\{ u>0  \}}=\chi_{\{  v<0  \}}$ a.e. in $\Omega$; this yields a contradiction since 
we have proved that $u$ and $v$ have the same sign.

 The same arguments yield that we can not have $u<v$ a.e. in $\Omega$. The conclusion follows.

\end{proof}

\begin{proof}[Proof of Theorem \ref{uniq_ren_2}]
%{\sl Proof of Theorem \ref{uniq_ren_2}} 

Arguing as in the previous theorem we obtain \eqref{iniz_ren} and we proceed by
dividing the proof by steps. The main difference with respect to the proof of
Theorem \ref{uniq_ren_1} is that for $p>2$ we have to control quadratic terms in
$u-v$ (see \eqref{step1_ren_2}) while $u$ and $v$ are solutions to a $p$-growth problem.
\medskip

\noindent {\sl Step 1.}  By passing to the limit in \eqref{iniz_ren} first as
$n\to +\infty$, then as $k\to 0$ this step is to devoted to prove that
\begin{equation} \label{step1_ren_2}
\lim_{k\to 0}\frac{1}{k^2}\int_\Omega  {|\nabla T_k(u-v)|^2}\, dx  =0.  
\end{equation}

\noindent We  pass to the limit in \eqref{iniz_ren} first as $n\to +\infty$, then
as $k\to 0$.  Arguing as in Step 1 of the previous theorem we get that 
\begin{gather*}%\label{_ren}
  \begin{split}
\lim_{n\to+\infty}\int_\Omega &h'_n(u)h_n(v)T_k(u-v) \\
& \times (\aop(x, \nabla u)+\Phi(x,  u)- \aop(x, \nabla v)-\Phi(x,  v))
\cdot \nabla u\, dx  =0
\end{split}
\\
\begin{split}
\lim_{n\to+\infty}\int_\Omega & h_n(u)h'_n(v)T_k(u-v)\\
& \times (\aop(x, \nabla u)+ \Phi(x,  u)- \aop(x, \nabla v)-\Phi(x,  v))\cdot \nabla
v\, dx=0\,.
\end{split}
\end{gather*}
Then, using the assumptions on the strong monotonicity on the operator \eqref{mon}, the local Lipschitz condition on $\Phi$ \eqref{lipphi} with $\tau$ which satisfies \eqref{tau} and Young inequality we get
\begin{equation*}
  \begin{split}
    \frac\beta2 &\int_\Omega  h_n(u)h_n(v) (1+|\nabla u| + |\nabla v|)^{p-2} {|\nabla T_k(u-v)|^2} \, dx   \\
    \leq {} & \omega_k(n)
   {}+ \frac{k^2}{2\beta}    \int_{\{0<|u-v|<k\}} h_n(u)h_n(v) |c(x)|^2 (1+|u|+|v|)^{2\tau} \, dx  , 
\end{split}
\end{equation*}
where $\disp\lim_{n}\omega_k(n)=0$. We then obtain
\begin{equation}\label{iniz2_ren2}
  \begin{split}
    \frac\beta2 &\int_\Omega  h_n(u)h_n(v) {|\nabla T_k(u-v)|^2} \, dx  
    \leq \omega_k(n)
    \\
& {}+ \frac{k^2}{2\beta}    \int_{\{0<|u-v|<k\}} h_n(u)h_n(v) |c(x)|^2 (1+|u|+|v|)^{2\tau} \, dx  .
\end{split}
\end{equation}
By H\"older inequality and assumptions on the data we get 
 \begin{equation}\label{step1_4_ren2}
   \begin{split}
     \int_{\Omega} &  h_n(u) h_n(v)  |c(x)|^2 (1+|u|+|v|)^{2\tau} \, dx   \\
 \le {}&\left( \int_{\{x\in\Omega : |u|<2n,\,|v|<2n\}} h_n(u)h_n(v) |c(x)|^{t}\, dx \right)^{\frac{2}{t}}\\
 & \times \left( \int_{\{x\in\Omega : |u|<2n,\,|v|<2n\}} h_n(u)h_n(v)
       (1+|u|+|v|)^{\nu}\, dx \right)^{\frac{t-2 }{2t}}
   \end{split}
\end{equation}
where $\nu= \frac{2t\tau}{t-2}$. According to the assumption on $\tau$  we have
$$  \frac{2t\tau}{t-2}< \frac{N(p-1)}{N-p}$$
which implies $\disp (1+|u|+|v|)^{\nu}\in L^1(\Omega)$. 
Making use of Fatou Lemma and \eqref{step1_4_ren2} we obtain
\begin{equation}
|c(x)|^2 (1+|u|+|v|)^{2\tau} \in L^1(\Omega).\label{step1_c_ren2}
\end{equation}

%%%%%
We can pass to the limit as $n\to +\infty$ in \eqref{iniz2_ren2}, then using Fatou Lemma we get 
\begin{multline}\label{iniz3_ren2}
\frac{1}{k^{2}} \int_\Omega  {|\nabla T_k(u-v)|^2} \, dx\\
% \notag  
\le \frac{1}{\beta^2} \int_{\{0<|u-v|<k\}} |c(x)|^2 (1+|u|+|v|)^{2\tau} \, dx\,.
\end{multline}
Recalling that $\chi_{\{0<|u-v|<k\}}$ converges to 0 a.e. as $k$ goes to zero
  Lebesgue
dominated Theorem and \eqref{step1_c_ren2} allow one to conclude that
 \eqref{step1_ren_2} holds.
\medskip

\noindent {\sl Step 2.} We prove that either
\begin{equation*}  
   \begin{cases}
       u=v    &\quad \text{a.e. in } \Omega, \\
  u<v    &\quad \text{a.e. in } \Omega,\\
u>v    &\quad \text{a.e. in } \Omega.
 \end{cases}
\end{equation*}
%$$u=v\,, \quad \hbox{a.e. in }\Omega\,\quad or  \quad u<v\,, \quad \hbox{a.e. in }\Omega\,\quad or\quad u>v\,, \quad \hbox{a.e. in }\Omega\,
%$$
Let us consider the function $ h_n(u)\frac{T_k(u-v)}{k}$ and observe that for $k<n$ 
\[
  h_n(u)\frac{T_k(u-v)}{k}= h_n(u)\frac{T_k(T_{3n}(u)-T_{3n}(v))}{k} \in
  L^{\infty}(\Omega)\cap W^{1,p}(\Omega).
\]
Since $p\geq 2$ the  function $ h_n(u)\frac{T_k(u-v)}{k}$ belongs to
$H^{1}(\Omega)$ by Poincar\' e-Wirtinger inequality we get
\begin{gather}\label{PWTkh_n2}
  \begin{split}
    \int_{\Omega}\Big| h_n(u) &\frac{T_k(u-v)}{k} -\med\big(h_{n}(u)
        \frac{T_{k}(u-v)}{k} \big) \Big|^2\, dx     \\
   &{} \le C\int_{\Omega} \left|\nabla \left (h_{n}(u) \frac{T_{k}(u-v)}{k}
      \right)\right |^{2}dx .
  \end{split}
\end{gather} 
Let us evaluate the integral at the right-hand side. We show that it goes to zero first as $k\to 0$ then as $n\to +\infty $.

Since 
\begin{gather*}  
\nabla \left (h_{n}(u)
\frac{T_{k}(u-v)}{k} \right)= h'_n(u)\nabla u \frac{T_k(u-v)}{k}\\
+h_n(u)  \frac{\nabla T_k(u-v)}{k} \quad \text{ a.e. in }\Omega
\end{gather*}
and $\disp \left| \frac{T_{k}(u-v)}{k} \right |\le 1$ 
we get
\begin{gather}\label{gradTkh_n2}
\int_{\Omega} \left|\nabla \left (h_{n}(u)
\frac{T_{k}(u-v)}{k} \right)\right |^{2}dx    \\  
\notag\\
\notag    \le  
\int_{\Omega}\left |h'_n(u)\nabla u\right |^2 dx %\frac{1}{n^2}\int_{\Omega}\left |\nabla T_{2n}(u)\right |^2 dx + 
+\frac{1}{k^2} \int_{\Omega}h_n(u)^2 \left |    {\nabla T_k(u-v)} \right |^2\, dx     \,.
\end{gather} 
It is easy to verify that for fixed $n$, as $k\to 0$
$$ \disp \nabla \left (h_{n}(u)
\frac{T_{k}(u-v)}{k} \right)\longrightarrow  h'_n(u)\nabla u \,  \sign (u-v)\,,
\text{ in } (L^2(\Omega))^N$$
Moreover by the definition of $h_n$
$$ 
\int_{\Omega}\left |h'_n(u)\nabla u\right |^2 dx \le 
\frac{1}{n^2}\int_{\Omega}\left |\nabla T_{2n}(u)\right |^2
dx
$$
so that \eqref{def3}, \eqref{step1_ren_2} and \eqref{gradTkh_n2} lead to
%$$\lim_{n\to +\infty}\frac{1}{n^2}\int_{\Omega}\left |\nabla T_{2n}(u)\right |^2=0,$$
%we obtain using \eqref{step1_ren_2} that 
\begin{equation}\notag
\lim_{n\to +\infty}\lim_{k\to 0}  \int_{\Omega}\left |    \nabla \left (h_{n}(u)
\frac{T_{k}(u-v)}{k} \right) \right |^2\, dx =0\,.
\end{equation}
Then, using \eqref{PWTkh_n2}, we deduce
\begin{gather}\label{lim_ren_bis}
\lim_{n\to +\infty}\,\lim_{k\to 0}  \,\int_{\Omega}\left | h_n(u)  \frac{T_k(u-v)}{k}  -\med\left (h_{n}(u)
\frac{T_{k}(u-v)}{k} \right) \right |^2\, dx=0.
\end{gather}
Since $\left| h_n(u) \frac{T_k(u-v)}{k}  \right|\le 1$, we obtain
$$
\left| \med \left(h_n(u) \frac{T_k(u-v)}{k}\right )\right | \le 1\,, \,\,  k>0.
$$
It follows that, up to a subsequence, by  \eqref{lim_ren_bis}
$$
\lim_{n\to +\infty} \lim_{k\to 0}  \med \left(h_n(u)\frac{T_k(u-v)}{k}\right )=\gamma\,.
$$
for a suitable constant $\gamma \in \R$, $|\gamma | \le 1$. 

On the other hand
since $u$ is finite a.e.
$$ \lim_{k\to 0}h_n(u) \frac{T_k(u-v)}{k} =  h_n(u)\, \sign(u-v)\,,\quad
\text{a.e. and}\,L^{\infty}(\Omega) \text{ weak-$*$,} 
$$
$$\lim_{n\to +\infty} h_n(u) \, \sign(u-v)=   \sign(u-v)\,,\quad\text{a.e.
  and}\,L^{\infty}(\Omega) \text{ weak-$*$} 
$$
Then, up to subsequence, by {\eqref{lim_ren_bis}} we get
$$
\int_\Omega |\hbox{sign }(u-v)-\gamma|^2\, dx=0
$$
This implies
$$
\gamma=0\,  \qquad \hbox{or}\qquad \gamma=-1\, \qquad \hbox{or}\qquad \gamma=1\,.
$$
This means that  either
$$
u=v\,, \, \hbox{a.e. in }\Omega\,\, \hbox{or }  \, u<v\,, \, \hbox{a.e. in }\Omega\,\, \hbox{or } 
\, u>v\,, \, \hbox{a.e. in }\Omega\,. 
$$

\medskip
Arguing as in Step 3 of the previous theorem, we can prove that the last two possibilities can not occur.
Then conclusion follows.

\end{proof}

\begin{remark} %(general class of operator)
As in the case of weak solutions, the existence of   renormalized solutions hold for a class of more general problems \eqref{pbcompleto}
%\begin{equation} \label{pb_general}\left\{\begin{array}{lll}-\mbox{div}\left( \aop\left( x, u,\nabla u\right)+ \Phi (x,u) \right) =f &  &\text{in}\ \Omega, \\ \left( \aop\left( x, u,\nabla u\right)+ \Phi (x,u) \right)\cdot\underline n=0& & \text{on}\ \partial \Omega ,%\end{array}%\right. \end{equation}%
where $f$ belongs to $ L^1(\Omega)$,  $\Phi$ verifies growth conditions and $\aop(x,r, \xi)$ is a Leray-Lions operator which depends on $x$, $s$ and $\xi$ (see \cite{BGM1}).

\noindent Due to the lack of regularity of $u$ in the $L^1$ case by using the techniques developped in the present paper it seems not possible to obtain uniqueness result when $ \aop$ verifies \eqref{ell_comp}-\eqref{lips_a}.
Let us explain the main obstacle in the case $p=2$ and what kind of stronger assumptions on $\aop$ insures the uniqueness of the renormalized solution.
In view of the proof of Theorem \ref{uniq_ren_1} and Theorem \ref{uniq_ren_2} the only new difficulty when $\aop$ depends on $x,\,r,\,\xi$ is to prove {\sl Step 1} which is when $p=2$
\begin{equation} \label{ren_general_1}
\lim_{k\to 0}\frac{1}{k^2}\int_\Omega  {|\nabla T_k(u-v)|^2}\, dx  =0.  
\end{equation} 
In {\sl Step 2} and {\sl Step 3} the structure of the operator does not play any role. 
Equation \eqref{iniz_ren} in which we pass to the limit first as $n\to +\infty$ and then as $k\to 0$ to derive \eqref{ren_general_1} becomes 
\begin{gather}\label{iniz_ren_general}
 \int_\Omega h_n(u)h_n(v)(\aop(x, u,\nabla u)- \aop(x, v,\nabla v))\cdot \nabla T_k(u-v)\, dx   \\ 
\notag    +  \int_\Omega h_n(u)h_n(v)(\Phi(x,  u)- \Phi(x,  v))\cdot \nabla T_k(u-v)\, dx\\
\notag
 +\int_\Omega h'_n(u)h_n(v)T_k(u-v)(\aop(x, u,\nabla u) \qquad\qquad\qquad\qquad
 \\\notag
\qquad \qquad\qquad\qquad{}+ \Phi(x, u)- \aop(x, v,
  \nabla v)-\Phi(x, v))\cdot \nabla u\, dx
\\
\notag    +  \int_\Omega h_n(u)h'_n(v)T_k(u-v)(\aop(x, u,\nabla u)
\qquad\qquad\qquad\qquad \\
\notag
 \qquad\qquad\qquad\qquad {} + \Phi(x,  u)- \aop(x, v,\nabla v)-\Phi(x,  v))\cdot \nabla v\, dx=0\,.
\end{gather}
Since the operator is pseudo-monotone the main obstacle is the control of the first term of \eqref{iniz_ren_general}.
\begin{gather*}\label{ }
 \int_\Omega h_n(u)h_n(v)(\aop(x, u,\nabla u)- \aop(x, v,\nabla v))\cdot \nabla T_k(u-v)\, dx   \\ 
\notag =
 \int_\Omega h_n(u)h_n(v)(\aop(x, u,\nabla u)- \aop(x, u,\nabla v))\cdot \nabla T_k(u-v)\, dx   \\ 
 \notag +
 \int_\Omega h_n(u)h_n(v)(\aop(x, u,\nabla v)- \aop(x, v,\nabla v))\cdot \nabla T_k(u-v)\, dx   \\ 
\notag \ge 
 \beta \int_\Omega h_n(u)h_n(v)\left|\nabla T_k(u-v)\right|^2 \, dx  \\
 \notag +  \int_\Omega h_n(u)h_n(v)(\aop(x, u,\nabla v)- \aop(x, v,\nabla v))\cdot \nabla T_k(u-v)\, dx  \\ 
 \notag \ge 
 \frac\beta2 \int_\Omega h_n(u)h_n(v)\left|\nabla T_k(u-v)\right|^2 \, dx  \\
 \notag -  \int_{\{|u-v|<k\}} h_n(u)h_n(v)\left|\aop(x, u,\nabla v)- \aop(x, v,\nabla v))\right|^2 \, dx  .
 \end{gather*}
 Passing first as $n\to + \infty$ and then as $k\to 0$  requires to have
 \linebreak $\chi_{\{0<|u-v|<k\}} \left|\aop(x, u,\nabla v)- \aop(x, v,\nabla v))\right|^2\in L^1(\Omega)$. If $\aop$ verifies
 \begin{equation*}\label{lips_a_general}
  |\aop(x,s, \xi)- \aop(x,r, \xi)|\leq  |s-r| |\xi|, 
  \end{equation*}
then 
\begin{equation*}
\chi_{\{0<|u-v|<k\}} \left|\aop(x, u,\nabla v)- \aop(x, v,\nabla v))\right|^2\le k^2|\nabla v|^2
\end{equation*}
and we cannot expect to have $|\nabla v|^2\in L^1(\Omega)$ for $L^1$ data.
However by assuming a stronger control of the Lipschitz coefficient of $\aop (x,\,r,\, \xi)$ with respect to $r$, namely
\begin{equation*}\label{lips_a_p=2}
  |\aop(x,s, \xi)- \aop(x,r, \xi)|\leq  \frac{|s-r| }{(1+|s|+|r|)^\lambda}|\xi|, 
  \end{equation*}
with $\lambda>\frac12$, we have 
\begin{equation*}
\chi_{\{0<|u-v|<k\}} \left|\aop(x, u,\nabla v)- \aop(x, v,\nabla v))\right|^2\le k^2 \frac{|\nabla v|^2}{(1+| v|)^{2\lambda}} 
\end{equation*}
and since $2\lambda >1$, estimate \eqref{ermk2} implies
that
\[
  \chi_{\{0<|u-v|<k\}} \left|\aop(x, u,\nabla u)- \aop(x, v,\nabla
    v))\right|^2\in L^1(\Omega).
\]
It follows that
\[
  \chi_{\{0<|u-v|<k\}} \frac{1}{k^2}\left|a(x, u,\nabla v)- a(x,
    v,\nabla v))\right|^2\to 0, \text{  in } L^1(\Omega).
\]
Since the other terms in \eqref{iniz_ren_general} can be controlled by similar
methods to the one used in Theorem~\ref{uniq_ren_2} we are able to conclude
that \eqref{ren_general_1} holds and then that $u=v$ a.e. in $\Omega$.

We now give the complete version of Theorem \ref{uniq_ren_1}  and \ref{uniq_ren_2} for problem \eqref{pbcompleto}.  As in the weak case  we assume that $\aop(x,r, \xi)$
 is a Carath\'eodory 
function which verifies \eqref{ell_comp}, \eqref{growth_comp}, \eqref{mon_comp}, $f\in L^1(\Omega)$ and $\Phi$ verifies \eqref{growthphi} and \eqref{lipphi}. 

When $1<p<2, $ if $\tau\le p-\frac32$ and if  $\aop(x,r, \xi)$
satisfies 
\begin{equation}\label{lips_a_general}
  |\aop(x,s, \xi)- \aop(x,r, \xi)|\leq  C\frac{|s-r| }{(1+|s|+|r|)^\lambda}\left (|\xi|^{p-1}+|s|^{p-1}+|r|^{p-1}+h(x)\right), 
  \end{equation}
  with $\lambda >\frac12$, $h\ge0$ and $h\in L^{p'}(\Omega)$, then the renormalized solution $u$ with null median of \eqref{pbcompleto} is unique. 
  
When $p\ge2$ if 
$$ \tau\le \frac{N(p-1)}{N-p}\left(\frac12 -\frac{1}{t}\right)
$$
and if $\aop(x,r, \xi)$
satisfies 
\begin{gather}\label{lips_a_generalbis}
  |\aop(x,s, \xi)- \aop(x,r, \xi)|  \\
\notag \leq C\frac{|s-r| }{(1+|s|+|r|)^\lambda}\left (|\xi|^{p-1}+|s|^{\frac{p(N-1)}{2(N-p)} }+|r|^{\frac{p(N-1)}{2(N-p)} }+h(x)\right), 
  \end{gather}
 with $\lambda >\frac12$, $h\ge0$ and $h\in L^2(\Omega)$, then the renormalized solution $u$ with null median of \eqref{pbcompleto} is unique. It is worth noting that \eqref{lips_a_general} and \eqref{lips_a_generalbis} are similar except in the power of $|s|$ and $|r|$ and the regularity of $h$. The main reason is that for $p\ge2$ we use quadratic method for a $p$-growth equation. 
\end{remark}

\section*{Acknowledgement}
Research partially supported by Gruppo Nazionale per l'Analisi Matematica, la Probabilit\`a
e le loro Applicazioni (GNAMPA) of the Istituto Nazionale di Alta Matematica (INdAM),  ``Programma triennale
della Ricerca dellÕUniversit\`a degli Studi di Napoli  ``Parthenope'' - Sostegno alla ricerca individuale 2015-2017''.
This work was done during the visits made by the   third author  to
Laboratoire de Math\'ematiques ``Rapha\"el Salem'' de l'Universit\'e
de Rouen and by the second author to 
 Dipartimento di Matematica e
Applicazioni ``R. Caccioppoli'' of University of Naples 
 Federico II. 
 Hospitality and support of all these
institutions are gratefully acknowledged.

\bibliography{bgm1}

\def\cprime{$'$}
\begin{thebibliography}{10}

\bibitem{ACMM}
A.~Alvino, A.~Cianchi, V.~G. Maz'ya, and A.~Mercaldo.
\newblock Well-posed elliptic {N}eumann problems involving irregular data and
  domains.
\newblock {\em Ann. Inst. H. Poincar\'e Anal. Non Lin\'eaire},
  27(4):1017--1054, 2010.

\bibitem{AM1}
A.~Alvino and A.~Mercaldo.
\newblock Nonlinear elliptic problems with {$L^1$} data: an approach via
  symmetrization methods.
\newblock {\em Mediterr. J. Math.}, 5(2):173--185, 2008.

\bibitem{AMST97}
F.~Andreu, J.~M. Maz{\'o}n, S.~Segura~de Le{\'o}n, and J.~Toledo.
\newblock Quasi-linear elliptic and parabolic equations in {$L^1$} with
  nonlinear boundary conditions.
\newblock {\em Adv. Math. Sci. Appl.}, 7(1):183--213, 1997.

\bibitem{Ar86}
M.~Artola.
\newblock Sur une classe de probl\`emes paraboliques quasi-lin\'eaires.
\newblock {\em Boll. Un. Mat. Ital. B (6)}, 5(1):51--70, 1986.

\bibitem{BDD}
G.~Barles, G.~Diaz, and J.~I. Diaz.
\newblock Uniqueness and continuum of foliated solutions for a quasilinear
  elliptic equation with a non-{L}ipschitz nonlinearity.
\newblock {\em Comm. Partial Differential Equations}, 17(5-6):1037--1050, 1992.

\bibitem{BeGu}
M.~Ben Cheikh~Ali and O.~Guib{\'e}.
\newblock Nonlinear and non-coercive elliptic problems with integrable data.
\newblock {\em Adv. Math. Sci. Appl.}, 16(1):275--297, 2006.

\bibitem{BBGGPV}
P.~B{\'e}nilan, L.~Boccardo, T.~Gallou{\"e}t, R.~Gariepy, M.~Pierre, and J.~L.
  V{\'a}zquez.
\newblock An {$L^1$}-theory of existence and uniqueness of solutions of
  nonlinear elliptic equations.
\newblock {\em Ann. Scuola Norm. Sup. Pisa Cl. Sci. (4)}, 22(2):241--273, 1995.

\bibitem{BGM1}
M.~F. Betta, O.~Guib{\'e}, and A.~Mercaldo.
\newblock Neumann problems for nonlinear elliptic equations with {$L^1$} data.
\newblock {\em J. Differential Equations}, 259(3):898--924, 2015.

\bibitem{BMMP1}
M.~F. Betta, A.~Mercaldo, F.~Murat, and M.~M. Porzio.
\newblock Existence and uniqueness results for nonlinear elliptic problems with
  a lower order term and measure datum.
\newblock {\em C. R. Math. Acad. Sci. Paris}, 334(9):757--762, 2002.

\bibitem{BMMP}
M.~F. Betta, A.~Mercaldo, F.~Murat, and M.~M. Porzio.
\newblock Existence of renormalized solutions to nonlinear elliptic equations
  with a lower-order term and right-hand side a measure.
\newblock {\em J. Math. Pures Appl. (9)}, 82(1):90--124, 2003.
\newblock Corrected reprint of J. Math. Pures Appl. (9) {{\bf{8}}1} (2002), no.
  6, 533--566 [ MR1912411 (2003e:35075)].

\bibitem{BG1}
L.~Boccardo and T.~Gallou{\"e}t.
\newblock Nonlinear elliptic and parabolic equations involving measure data.
\newblock {\em J. Funct. Anal.}, 87(1):149--169, 1989.

\bibitem{BG2}
L.~Boccardo and T.~Gallou{\"e}t.
\newblock Nonlinear elliptic equations with right-hand side measures.
\newblock {\em Comm. Partial Differential Equations}, 17(3-4):641--655, 1992.

\bibitem{BGM92}
L.~Boccardo, T.~Gallou{\"e}t, and F.~Murat.
\newblock Unicit\'e de la solution de certaines \'equations elliptiques non
  lin\'eaires.
\newblock {\em C. R. Acad. Sci. Paris S\'er. I Math.}, 315(11):1159--1164,
  1992.

\bibitem{Ciabr}
J.~Chabrowski.
\newblock On the {N}eumann problem with {$L^1$} data.
\newblock {\em Colloq. Math.}, 107(2):301--316, 2007.

\bibitem{ChiMi}
M.~Chipot and G.~Michaille.
\newblock Uniqueness results and monotonicity properties for strongly nonlinear
  elliptic variational inequalities.
\newblock {\em Ann. Scuola Norm. Sup. Pisa Cl. Sci. (4)}, 16(1):137--166, 1989.

\bibitem{DMOP}
G.~Dal~Maso, F.~Murat, L.~Orsina, and A.~Prignet.
\newblock Renormalized solutions of elliptic equations with general measure
  data.
\newblock {\em Ann. Scuola Norm. Sup. Pisa Cl. Sci. (4)}, 28(4):741--808, 1999.

\bibitem{Aglio}
A.~Dall'Aglio.
\newblock Approximated solutions of equations with {$L^1$} data. {A}pplication
  to the {$H$}-convergence of quasi-linear parabolic equations.
\newblock {\em Ann. Mat. Pura Appl. (4)}, 170:207--240, 1996.

\bibitem{Rako}
A.~Decarreau, J.~Liang, and J.-M. Rakotoson.
\newblock Trace imbeddings for {$T$}-sets and application to
  {N}eumann-{D}irichlet problems with measures included in the boundary data.
\newblock {\em Ann. Fac. Sci. Toulouse Math. (6)}, 5(3):443--470, 1996.

\bibitem{Droniou00}
J.~Droniou.
\newblock Solving convection-diffusion equations with mixed, {N}eumann and
  {F}ourier boundary conditions and measures as data, by a duality method.
\newblock {\em Adv. Differential Equations}, 5(10-12):1341--1396, 2000.

\bibitem{DV}
J.~Droniou and J.-L. V{\'a}zquez.
\newblock Noncoercive convection-diffusion elliptic problems with {N}eumann
  boundary conditions.
\newblock {\em Calc. Var. Partial Differential Equations}, 34(4):413--434,
  2009.

\bibitem{FM2}
V.~Ferone and A.~Mercaldo.
\newblock A second order derivation formula for functions defined by integrals.
\newblock {\em C. R. Acad. Sci. Paris S\'er. I Math.}, 326(5):549--554, 1998.

\bibitem{FM}
V.~Ferone and A.~Mercaldo.
\newblock Neumann problems and {S}teiner symmetrization.
\newblock {\em Comm. Partial Differential Equations}, 30(10-12):1537--1553,
  2005.

\bibitem{GM2}
O.~Guib{\'e} and A.~Mercaldo.
\newblock Existence and stability results for renormalized solutions to
  noncoercive nonlinear elliptic equations with measure data.
\newblock {\em Potential Anal.}, 25(3):223--258, 2006.

\bibitem{GM1}
O.~Guib{\'e} and A.~Mercaldo.
\newblock Existence of renormalized solutions to nonlinear elliptic equations
  with two lower order terms and measure data.
\newblock {\em Trans. Amer. Math. Soc.}, 360(2):643--669 (electronic), 2008.

\bibitem{LL}
J.~Leray and J.-L. Lions.
\newblock Quelques r\'esulatats de {V}i\v sik sur les probl\`emes elliptiques
  non lin\'eaires par les m\'ethodes de {M}inty-{B}rowder.
\newblock {\em Bull. Soc. Math. France}, 93:97--107, 1965.

\bibitem{Lions}
J.-L. Lions.
\newblock {\em Quelques m\'ethodes de r\'esolution des probl\`emes aux limites
  non lin\'eaires}.
\newblock Dunod, 1969.

\bibitem{LM}
P.L. Lions and F.~Murat.
\newblock Sur les solutions renormalis\'ees d'{\'e}quations elliptiques non
  lin\'eaires.
\newblock In {\em manuscript}.

\bibitem{murat94}
F.~Murat.
\newblock Equations elliptiques non lin\'eaires avec second membre ${L}^1$ ou
  mesure.
\newblock In {\em Compte Rendus du 26\`eme Congr\`es d'Analyse Num\'erique},
  les Karellis, 1994.

\bibitem{Prignet97}
A.~Prignet.
\newblock Conditions aux limites non homog\`enes pour des probl\`emes
  elliptiques avec second membre mesure.
\newblock {\em Ann. Fac. Sci. Toulouse Math. (6)}, 6(2):297--318, 1997.

\bibitem{Z}
W.~P. Ziemer.
\newblock {\em Weakly differentiable functions}, volume 120 of {\em Graduate
  Texts in Mathematics}.
\newblock Springer-Verlag, New York, 1989.
\newblock Sobolev spaces and functions of bounded variation.

\end{thebibliography}
\bibliographystyle{plain}
% \bibliography{}

\end{document}